\let\wfs@comment@comment\comment
\let\comment\@undefined
\let\wfs@changes@comment\comment
\let\comment\@undefined
\newcommand\comment{%
    \ifthenelse{\equal{\@currenvir}{comment}}
    {\wfs@comment@comment}
    {\wfs@changes@comment}%
}
\theoremstyle{plain}
\newtheorem{thm}{Theorem}[section]
\newtheorem{lem}[thm]{Lemma}
\newtheorem{prop}[thm]{Proposition}
\newtheorem{cor}[thm]{Corollary}
\author{Valentino Smaldore, Corrado Zanella and Ferdinando Zullo}
\title{On the stabilizer of the graph of linear functions over finite fields}
\date{\today}
\newcommand{\F}{\mathbb F}
\newcommand{\Fq}{{{\mathbb F}_q}}
\newcommand{\fq}{{{\mathbb F}_q}}
\newcommand{\Fqn}{{{\mathbb F}_{q^n}}}
\newcommand{\fqn}{{{\mathbb F}_{q^n}}}
\newcommand{\Fqt}{{{\mathbb F}_{q^t}}}
\newcommand{\cD}{{\mathcal D}}
\newcommand{\cL}{{\mathcal L}}
\newcommand{\la}{\langle}
\newcommand{\ra}{\rangle}
\newcommand{\Rt}{\operatorname{R-}q^t\operatorname-}
\newcommand{\Lt}{\operatorname{L-}q^t\operatorname-}
\DeclareMathOperator{\PG}{PG}
\DeclareMathOperator{\N}{N}
\DeclareMathOperator{\Tr}{Tr}
\DeclareMathOperator{\GL}{GL}
\DeclareMathOperator{\im}{im}
\DeclareMathOperator{\C}{\mathcal{C}}
\theoremstyle{definition}
\newtheorem{defn}[thm]{Definition}
\newtheorem{rem}[thm]{Remark}
\newtheorem{ex}[thm]{Example}
\begin{document}

\maketitle


\begin{abstract}
In this paper we will study the action of $\F_{q^n}^{2 \times 2}$ on the graph of an $\fq$-linear function of $\fqn$ into itself. In particular we will see that, under certain combinatorial assumptions, its stabilizer (together with the sum and product of matrices) is a field. We will also see some examples for which this does not happen. Moreover, we will establish a connection between such a stabilizer and the right idealizer of the rank-metric code defined by the linear function and give some structural results in the case in which the polynomials are partially scattered.
\end{abstract}

\section*{Introduction}

Let $f \in \mathbb{F}_q[x]$, the \textit{graph} of $f$ is defined as the following set of affine points
\[ \mathcal{G}_f=\{ (y,f(y)) \colon y \in \fq \} \subseteq \mathrm{AG}(2,q). \]
We can see the projective plane $\mathrm{PG}(2,q)$ as the union of $\mathrm{AG}(2,q)$ and the line at infinity $\ell_{\infty}$.
In coordinates, the points of $\mathrm{PG}(2,q)$ are of the form $\langle (x_0,x_1,x_2) \rangle_{\fq}$ for some $(x_0,x_1,x_2)\in \mathbb{F}_q^3\setminus \{(0,0,0)\}$ and we may assume that $\ell_{\infty}$ is the set of points defined by the vectors with last component equal to zero and so the points in $\mathrm{AG}(2,q)$ are those of the form $\langle (a,b,1) \rangle_{\fq}$ for some $a,b \in \fq$, which in $\mathrm{AG}(2,q)$ is defined by the pair $(a,b)$. 

The set of \textit{directions} of $f  \in \mathbb{F}_q[x]$ is defined as
\[\mathcal{D}_f=\{ PQ \cap \ell_{\infty} \colon P,Q \in \mathcal{G}_f, \,\,\, P \ne Q \},\]
where $PQ$ denotes the line through the points $P$ and $Q$.
Note that 
\[\mathcal{D}_f=\{\la (1,m,0)\ra_{\fq} \colon m \in D_f\},\]
where $D_f$ is the set of slopes of the lines used in $\mathcal{D}_f$, that is
\[D_f=\left\{ \frac{f(y)-f(z)}{y-z} \colon y,z \in \fq, \,\,\, y \ne z \right\}.\]
Combinatorial conditions on $\mathcal{G}_f$ and/or $\mathcal{D}_f$ can give algebraic properties on $f$; see for instance the well-celebrated results in \cite{Ball,Ball2} where some conditions on the intersections between $\mathcal{G}_f$ and the affine lines together with bounds on the number of directions yield some linearity conditions on $f$. 
In this paper we will mainly consider the case in which $f \in \fqn[x]$ and it is an $\fq$-linear function, i.e.\ $f $ is a linearized polynomial.
In this case,
\[\mathcal{D}_f=\{\la (y,f(y),0)\ra_{\fq} \colon y \in  \mathbb{F}_{q^n}^*\}\]
and
\[D_f=\left\{ \frac{f(y)}{y} \colon y \in \mathbb{F}_{q^n}^* \right\}.\]


Since $f $ is $\fq$-linear,
the affine lines meet $\mathcal{G}_f$ in either zero points or in a power of $q$ points. 

In this paper we investigate the natural action of the group $(\mathbb{F}_{q^n}^{2\times 2},+)$ on $\mathcal{G}_f$ by considering the {set}
\[ \mathbb{S}_f=\{ A \in \mathbb{F}_{q^n}^{2\times 2} \colon A\mathcal{G}_f\subseteq\mathcal{G}_f  \}, \]
where $A \mathcal{G}_f=\left\{ A \left(  \begin{array}{cc} y \\ f(y) \end{array}\right) \colon y \in \fqn \right\}$.
{Clearly} $\mathbb{S}_f$, together with $+$ and $\cdot$ the usual sum and product of matrices in $\mathbb{F}_{q^n}^{2\times 2}$ and $\star$ the multiplication by a scalar in $\fq$, forms an $\fq$-algebra.

Section \ref{s:preliminaries} provides the necessary notions related to linearized polynomials, scattered and partially scattered polynomials, linear sets, their relations with the graph of a linear function, and rank-metric codes.

In Section~\ref{s:Sf} we will prove that when $f$ has \emph{low weight}, that is if for every affine line $\ell$
\begin{equation}\label{e:lw}
|\ell \cap \mathcal{G}_f|<q^{n/2},
\end{equation}
then $(\mathbb{S}_f,+,\cdot)$ is a field.
Counterexamples in the case that $f$ has not low weight are added.

In Section~\ref{s:PS} the set $\mathbb S_f$ will be determined for many
classes of linearized polynomials, containing all known scattered polynomials.
Proposition~\ref{p:weightps} shows that the partially scattered polynomials are almost of low weight. 
The $\Lt$partially scattered polynomials $f$ such that $\mathbb S_f$ is not a field are then completely characterized. 
As regards $\Rt$partially scattered polynomials, a class of polynomials is exhibited such that $\mathbb S_f$ is not a field.

In Section~\ref{s:RCf}, it is proved that the right idealizer of the rank distance code $\mathcal C_f$ associated with a linearized polynomial $f$
is isomorphic to $\mathbb S_f$.
This implies that many of the results in this paper can be translated
in terms of rank distance codes.

\section{Preliminaries}\label{s:preliminaries}

\subsection{Linearized polynomials, scattered and partially scattered polynomials}

Let $q$ be a prime power, and $n$  a positive integer.
A \emph{$q$-polynomial} (or \emph{linearized polynomial}) over the finite field $\fqn$ has form
$f =\sum_{i=0}^{k} a_i x^{q^i}\in\fqn[x]$;
if $a_k\neq0$ then the \emph{$q$-degree} of $f $ is $k$.
The set of linearized polynomials over $\fqn$ will be denoted as $L_{n,q}$.
Such set, equipped with the operations of sum and multiplication by elements of $\fq$ and the composition, results to be an $\fq$-algebra.
The quotient algebra $\mathcal{L}_{n,q}=L_{n,q}/(x^{q^n}-x)$ has the property that its elements are in one-to-one correspondence with the  $\fq$-linear
endomorphisms of $\fqn$.
Note that we can identify the elements of $\mathcal{L}_{n,q}$ with the $q$-polynomials having $q$-degree smaller than $n$.

Following \cite{LZ2}, let $f $ be a $q$-linearized polynomial over $\fqn$, $t$ a divisor of $n$ such that $1<t<n$, so that $n=tt'$.
We say that $f $ is \emph{$\Lt$partially scattered} if for any $y,z\in\mathbb F_{q^n}^*$,
\begin{equation}\label{eq:condL}
\frac{f(y)}{y}=\frac{f(z)}{z}\Longrightarrow \frac{y}{z}\in\mathbb{F}_{q^t},
\end{equation}
and that $f $ is \emph{$\Rt$partially scattered} if for any  
$y,z\in\mathbb F_{q^n}^*$,
\begin{equation}\label{eq:condR}
\frac{f(y)}{y}=\frac{f(z)}{z}\,\mbox{ and }\, \frac{y}{z}\in\mathbb{F}_{q^t}\Longrightarrow \frac{y}{z}\in\fq.
\end{equation}
A polynomial $f$ which is both $\Lt$partially scattered and $\Rt$partially scattered is called \emph{scattered} (see \cite{Sheekey}).

Let $f $ and $g $ be two linearized polynomials over $\fqn$ and consider the two related graphs $\mathcal{G}_f$ and $\mathcal{G}_g$ in AG$(2,q^n)$.
We say that $f $ and $g $ are \emph{equivalent} if there exists $\varphi \in \mathrm{\Gamma L}(2,q^n)$ such that $\varphi(\mathcal{G}_f)=\mathcal{G}_g$, that is, there exist $A\in \mathrm{GL}(2,q^n)$ and $\sigma \in \mathrm{Aut}(\fqn)$ with the property that for each $x \in \fqn$ there exists $y \in \fqn$ satisfying
\[ A \left( \begin{array}{cc} x^\sigma\\ f(x) ^\sigma \end{array} \right)=\left( \begin{array}{cc} y\\ g(y) \end{array} \right), \]
see \cite[Section 1]{BMZZ} and \cite[Section 1]{CsMPq5}.

This definition of equivalence preserves the property of being $\Rt$ and $\Lt$partially scattered:

\begin{prop}\cite[Proposition 7.1]{BZZ}
Let $f $ and $g $ be two equivalent $q$-polynomials in $\mathcal{L}_{n,q}$.
If $f $ is $\Rt$partially scattered (resp.\ $\Lt$partially scattered), 
then $g $ is $\Rt$partially scattered (resp.\ $\Lt$partially scattered).
\end{prop}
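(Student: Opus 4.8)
The plan is to unwind the definition of equivalence and track how it transforms conditions \eqref{eq:condL} and \eqref{eq:condR}. Suppose $\varphi\in\GaL(2,q^n)$ with $\varphi(\mathcal G_f)=\mathcal G_g$, so there are $A=\begin{pmatrix}a&b\\c&d\end{pmatrix}\in\GL(2,q^n)$ and $\sigma\in\mathrm{Aut}(\fqn)$ such that for every $x\in\fqn$ the image $A(x^\sigma,f(x)^\sigma)^{\mathrm t}$ equals $(y,g(y))^{\mathrm t}$ for a uniquely determined $y=ax^\sigma+bf(x)^\sigma$. Write $\psi(x)=ax^\sigma+bf(x)^\sigma$; since $A$ is invertible and $\sigma$ is additive, $\psi$ is an additive bijection of $\fqn$ (it is $\fq$-semilinear), and $g(\psi(x))=cx^\sigma+df(x)^\sigma$. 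The first step is to record these two identities and note that $\psi$ is a bijection, so every $y\in\fqn$ is $\psi(x)$ for a unique $x$.

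Next I would compute the ratio $g(y)/y$ along the graph of $g$. For $x\in\fqn^*$ with $\psi(x)\ne 0$ we get
\[
\frac{g(\psi(x))}{\psi(x)}=\frac{cx^\sigma+df(x)^\sigma}{ax^\sigma+bf(x)^\sigma}
=\frac{c+d\,(f(x)/x)^\sigma}{a+b\,(f(x)/x)^\sigma},
\]
after dividing numerator and denominator by $x^\sigma$ and using $f(x)^\sigma/x^\sigma=(f(x)/x)^\sigma$. Thus $g(y_1)/y_1=g(y_2)/y_2$ with $y_i=\psi(x_i)$ is equivalent, via the Möbius transformation $z\mapsto (c+dz)/(a+bz)$ (which is injective on $\fqn\cup\{\infty\}$ because $ad-bc\ne0$), to $(f(x_1)/x_1)^\sigma=(f(x_2)/x_2)^\sigma$, i.e.\ to $f(x_1)/x_1=f(x_2)/x_2$. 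So the ``collision sets'' of $g$ and $f$ correspond under $\psi$. One subtlety to dispatch: the cases where a denominator vanishes, i.e.\ $y=0$ (then $x=0$ too since $\psi$ is a bijection fixing $0$) or $a+b(f(x)/x)^\sigma=0$; in the latter case $\psi(x)=0$ forces $x=0$, contradiction, so this does not occur for $x\ne0$ — this needs a line to check but is routine.

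Finally I would translate the ratio $y_1/y_2$. We have $y_1/y_2=\psi(x_1)/\psi(x_2)$, and when $f(x_1)/x_1=f(x_2)/x_2=:\mu$ (so the common value of $(f(x)/x)^\sigma$ is $\mu^\sigma$), both numerators of $\psi$ share the factor $(a+b\mu^\sigma)$: $\psi(x_i)=(a+b\mu^\sigma)x_i^\sigma$. Hence $y_1/y_2=(x_1/x_2)^\sigma$. Since $\sigma$ is a field automorphism, $x_1/x_2\in\Fqt \iff (x_1/x_2)^\sigma\in\Fqt \iff y_1/y_2\in\Fqt$, and likewise for $\fq$. Putting these equivalences together: if $f$ is $\Lt$partially scattered, then $g(y_1)/y_1=g(y_2)/y_2$ gives $f(x_1)/x_1=f(x_2)/x_2$, hence $x_1/x_2\in\Fqt$, hence $y_1/y_2\in\Fqt$, so $g$ is $\Lt$partially scattered; and if $f$ is $\Rt$partially scattered, then $g(y_1)/y_1=g(y_2)/y_2$ together with $y_1/y_2\in\Fqt$ gives $f(x_1)/x_1=f(x_2)/x_2$ and $x_1/x_2\in\Fqt$, hence $x_1/x_2\in\fq$, hence $y_1/y_2\in\fq$, so $g$ is $\Rt$partially scattered.

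The only mild obstacle is bookkeeping around degenerate cases: verifying that $\psi$ is indeed a bijection (equivalently that $g$ is a well-defined function of $y$, which is exactly what the equivalence hypothesis guarantees), that $0\mapsto0$, and that no denominator $a+b(f(x)/x)^\sigma$ vanishes for $x\ne0$. Once those are in place the argument is a direct chain of ``if and only if'' statements, and the symmetry of the statement (swapping $f$ and $g$, $\sigma$ and $\sigma^{-1}$, $A$ and $A^{-1}$) actually gives the converse for free, though we only need the stated direction. Note also that $g$ being a genuine linearized polynomial in $\mathcal L_{n,q}$ is part of the hypothesis, so we need not re-derive $\fq$-linearity of $g$.
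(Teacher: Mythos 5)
Your proof is correct, but it follows a genuinely different route from the paper's. The paper argues geometrically: it invokes the characterizations from [LZ2] of $\Rt$partially scattered polynomials as those whose graph is a scattered $\fq$-subspace of $\mathrm{PG}(2t'-1,q^t)$, and of $\Lt$partially scattered ones as those whose graph is scattered with respect to a normal $(t'-1)$-spread; since these are properties of the subspace $\mathcal{G}_f$ preserved by any element of $\GaL(2t',q^t)\supseteq\GaL(2,q^n)$, the invariance is immediate. You instead work directly from the definitions: writing $y=\psi(x)=ax^\sigma+bf(x)^\sigma$ and $g(\psi(x))=cx^\sigma+df(x)^\sigma$, you show that the fractional-linear map $z\mapsto(c+dz)/(a+bz)$ identifies the collision sets of $g$ and $f$, and that on a collision class $y_1/y_2=(x_1/x_2)^\sigma$, so membership of the ratio in $\Fqt$ or $\fq$ is preserved because $\sigma$ fixes every subfield setwise. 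Your handling of the degenerate cases is adequate: the key point, which you correctly identify, is that $\psi$ is a bijection not merely because $A$ is invertible but because the image lands on the graph of the function $g$, whose points are determined by their first coordinate; this also rules out the vanishing of the denominator $a+b(f(x)/x)^\sigma$ for $x\neq0$. What each approach buys: yours is self-contained and elementary, needing no external machinery and in fact yielding the equivalence of the two collision structures (hence the converse for free, as you note); the paper's is shorter given the cited results and explains conceptually \emph{why} the property is invariant, namely that it is a statement about the $\fq$-subspace $\mathcal{G}_f$ relative to field-theoretic structures that $\GaL(2,q^n)$ respects.
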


\begin{proof}
 Since $\mathcal{G}_f$ and $\mathcal{G}_g$ are $\mathrm{\Gamma L}(2,q^n)$-equivalent, $\mathcal{G}_f$ and $\mathcal{G}_g$ are also $\mathrm{\Gamma L}(2t',q^t)$-equivalent. If $f $ is $\Rt$partially scattered, by \cite[Theorem 2.2]{LZ2}, $\mathcal{G}_f$ is a scattered subspace of $\mathrm{PG}(2t'-1,q^t)$. Therefore $\mathcal{G}_g$ is a scattered subspace of $\mathrm{PG}(2t'-1,q^t)$, and hence $g $ is $\Rt$partially scattered by \cite[Theorem 2.2]{LZ2}. Otherwise, if $f $ is $\Lt$partially scattered, by \cite[Remark 2.5]{LZ2}, $\mathcal{G}_f$ is scattered with respect to a normal $(t'-1)$-spread of $\mathrm{PG}(2t'-1,q^t)$. Therefore $\mathcal{G}_g$ is scattered with respect to a normal $(t'-1)$-spread of $\mathrm{PG}(2t'-1,q^t)$, and hence $g $ is $\Lt$partially scattered by \cite[Remark 2.5]{LZ2}. 
\end{proof}

\subsection{Linear sets and their relations with the graph of a linear function}\label{sec:linsetgraph}

Let $V$ be an $r$-dimensional $\fqn$-vector space and let $\Lambda=\PG(V,\fqn)=\PG(r-1,q^n)$. 
Let $U$ be an $\fq$-subspace of $V$ such that $\dim_{\fq}(U)=k$, then the set
\[ L_U=\{\langle u \rangle_{\fqn} \colon u\in U\setminus\{0\}\}
\subseteq\PG(r-1,q^n)\]
is said to be an $\fq$-\emph{linear set} of rank $k$.
The \emph{weight} of a point $P=\langle v\rangle_{\fqn} \in \Lambda$ in $L_U$ is defined as $w_{L_U}(P)=\dim_{\fq}(U \cap \langle v\rangle_{\fqn})$.

Furthermore, $L_U$ is called \emph{scattered} if it has the maximum number $\frac{q^k-1}{q-1}$ of points or, equivalently, if all points of $L_U$ have weight one.
Blokhuis and Lavrauw in \cite{BlLav} proved the following result. 

\begin{thm}\label{th:boundscatt}
    The rank of a scattered $\fq$-linear set in $\PG(r-1,q^n)$ is 
    upper bounded by $\frac{rn}2$.
\end{thm}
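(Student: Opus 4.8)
The plan is to translate the hypothesis "$L_U$ is scattered" into a statement about $\fq$-dimensions of a sum of subspaces of $V$, and then to conclude by Grassmann's identity together with the trivial bound $\dim_{\fq}(V)=rn$. Write $\Lambda=\PG(V,\fqn)=\PG(r-1,q^n)$ and $k=\dim_{\fq}(U)$; we may assume $n\ge 2$, so that $\fqn\setminus\fq\neq\emptyset$. Fix once and for all an element $\lambda\in\fqn\setminus\fq$. Since multiplication by $\lambda$ is an $\fq$-linear automorphism of $V$, we have $\dim_{\fq}(\lambda U)=\dim_{\fq}(U)=k$, so if I can show that $U$ and $\lambda U$ are in direct sum, Grassmann gives $\dim_{\fq}(U+\lambda U)=2k\le\dim_{\fq}(V)=rn$, which is exactly the claim.

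The one real step is therefore the following observation: if $L_U$ is scattered, then $U\cap\lambda U=\{0\}$ for every $\lambda\in\fqn\setminus\fq$. To see this, suppose $0\neq u\in U\cap\lambda U$, and write $u=\lambda w$ with $w\in U$. Then $u$ and $w=\lambda^{-1}u$ both lie in $U\cap\langle u\rangle_{\fqn}$, and they are $\fq$-linearly independent because $\lambda^{-1}\notin\fq$. Hence $w_{L_U}(\langle u\rangle_{\fqn})=\dim_{\fq}(U\cap\langle u\rangle_{\fqn})\ge 2$, contradicting the assumption that all points of $L_U$ have weight one. (For completeness one can also note the converse: a point $\langle v\rangle_{\fqn}$ of weight $\ge 2$ provides $\fq$-independent $u_1,u_2\in U\cap\langle v\rangle_{\fqn}$, and writing $u_2=\lambda u_1$ forces $\lambda\notin\fq$ and $0\neq u_2\in U\cap\lambda U$; this equivalent form of scatteredness is convenient but not needed for the bound.)

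Combining the two steps finishes the proof: with $\lambda\in\fqn\setminus\fq$ fixed, scatteredness gives $U\cap\lambda U=\{0\}$, so $2k=\dim_{\fq}(U)+\dim_{\fq}(\lambda U)=\dim_{\fq}(U+\lambda U)\le rn$, i.e.\ $k\le rn/2$. I do not expect a genuine obstacle here; the only subtlety is the translation between the geometric condition "every point of $L_U$ has weight one" and the algebraic condition "$U\cap\lambda U=\{0\}$ for $\lambda\notin\fq$", and the (harmless) need for $n\ge 2$ so that such a $\lambda$ exists.
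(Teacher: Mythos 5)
Your proof is correct. The paper itself gives no argument for this theorem --- it is quoted from Blokhuis and Lavrauw \cite{BlLav} --- so there is nothing internal to compare against; but your argument is sound and is in fact the classical mechanism behind the bound. The key step, that scatteredness of $L_U$ forces $U\cap\lambda U=\{0\}$ for any fixed $\lambda\in\fqn\setminus\fq$, is verified correctly: a nonzero $u=\lambda w$ with $u,w\in U$ yields two $\fq$-independent vectors $u$ and $\lambda^{-1}u$ in $U\cap\langle u\rangle_{\fqn}$, so the point $\langle u\rangle_{\fqn}$ of $L_U$ would have weight at least $2$. Grassmann then gives $2\dim_{\fq}(U)=\dim_{\fq}(U+\lambda U)\le\dim_{\fq}(V)=rn$. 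Your side remark that $n\ge 2$ is needed is apt (for $n=1$ every linear set is trivially scattered and the bound fails), but this is the standing convention for $\fq$-linear sets in $\PG(r-1,q^n)$, so no harm is done.
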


When $\dim_{\fq}(U)=n$ and $r=2$, then there exists a $q$-polynomial $f  \in \fqn[x]$ such that $L_U$ is mapped by a collineation of $\mathrm{PGL}(2,q^n)$ into
\[ L_f=L_{\mathcal{G}_f}=\{\langle (y,f(y)) \rangle_{\fqn} \colon y \in \mathbb{F}_{q^n}^*\}. \]
Note that the polynomial $f  \in \mathcal{L}_{n,q}$ is scattered if and only if $L_f$ is.
We will now describe the connection between linear sets of the projective line and graphs of linear functions.
Indeed, if $f  \in \fqn[x]$ is an $\fq$-linear function, then $\mathcal{D}_f$ coincides with the linear set $L_f$ defined by $f$ adding a zero component to each of its points. 
Assume that $\{i_1,\ldots,i_t\}$ is the set of weights of the points with respect to $L_f$ and let $\ell$ be any line in $\mathrm{PG}(2,q^n)$.
If $\ell$ meets $\ell_\infty$ in a point of  $\mathcal{D}_f$ of weight $j\in \{i_1,i_2,\ldots,i_t\}$ then $\ell$ meets $\mathcal{G}_f$ in exactly $q^j$ points or in zero points. 
Moreover, there are exactly $q^{n-i_j}$ lines hitting $\mathcal{G}_f$ in $q^{i_j}$ points for every point of weight $i_j$.
If $\ell$ meets $\ell_\infty$ in a point outside $\mathcal{D}_f$ then $\ell$ meets $\mathcal{G}_f$ in exactly one point. For more details we refer to \cite{FSzT}.
As a consequence, a low weight polynomial $f$ is a polynomial for which the associated linear set $L_f$ has all points of weight less than $n/2$.
These can give interesting constructions of Hamming metric codes with few weights, see \cite{NPSZcodes}.

\subsection{Rank-metric codes}

Rank-metric codes have been originally introduced by Delsarte \cite{Delsarte} in 1978. 
They have been intensively investigated in recent years because of their applications; we refer to \cite{sheekey_newest_preprint} for a survey on this topic.
The set $\mathbb{F}_q^{m\times n}$ of $m \times n$ matrices over $\fq$ can be endowed with the \emph{rank-metric} defined by
\[d(A,B) = \mathrm{rk}\,(A-B).\]
A subset $\C \subseteq \mathbb{F}_q^{m\times n}$ equipped with the rank-metric is called a \emph{rank-metric code}.
The \emph{minimum distance} of $\C$ is defined as
\[d = \min\{ d(A,B) \colon A,B \in \C,\,\, A\neq B \}.\]
We will denote the parameters of a rank-metric code $\C\subseteq\mathbb{F}_q^{m\times n}$ with minimum distance $d$ by $(m,n,q;d)$.
Delsarte showed in \cite{Delsarte} that the parameters of these codes must 
fulfill a Singleton-like bound.

\begin{thm}\label{th:Singleton}\cite{Delsarte}
If $\C$ is a rank-metric code with parameters $(m,n,q;d)$, then
\[ |\C| \leq q^{\max\{m,n\}(\min\{m,n\}-d+1)}. \]
\end{thm}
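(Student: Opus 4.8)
The plan is to mimic the classical proof of the Singleton bound in the Hamming metric, replacing ``coordinate shortening'' by ``row (or column) shortening''. First I would reduce to the case $m\le n$: transposition $A\mapsto A^{\top}$ preserves rank, hence is an isometry $\mathbb F_q^{m\times n}\to\mathbb F_q^{n\times m}$, so $\C^{\top}=\{A^{\top}\colon A\in\C\}$ is a rank-metric code with parameters $(n,m,q;d)$ and $|\C^{\top}|=|\C|$; thus it is enough to prove $|\C|\le q^{n(m-d+1)}$ under the assumption $m=\min\{m,n\}$. (Note that $d\le m$ automatically, since the rank of an $m\times n$ matrix never exceeds $m$, so the exponent $m-d+1$ is at least $1$ and no degenerate case arises.)

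Next I would introduce the $\fq$-linear projection $\pi\colon\mathbb F_q^{m\times n}\to\mathbb F_q^{(m-d+1)\times n}$ sending a matrix to the array formed by its first $m-d+1$ rows, and show that $\pi$ is injective on $\C$. The one ingredient needed is the elementary fact that if an $m\times n$ matrix has its first $m-d+1$ rows equal to $0$ then its rank is at most $d-1$ (deleting $d-1$ rows, equivalently projecting the column space along a subspace of dimension at most $d-1$, drops the rank by at most $d-1$). Hence, if $A,B\in\C$ with $\pi(A)=\pi(B)$, then $A-B$ has its first $m-d+1$ rows zero, so $\mathrm{rk}\,(A-B)\le d-1<d$, and the definition of minimum distance forces $A=B$.

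Finally, counting the image gives
\[
|\C|=|\pi(\C)|\le\bigl|\mathbb F_q^{(m-d+1)\times n}\bigr|=q^{n(m-d+1)}=q^{\max\{m,n\}(\min\{m,n\}-d+1)},
\]
which is the claimed bound. I do not anticipate a genuine obstacle: the entire argument is a single well-chosen injection. The only point that requires a little care is the very first reduction — one must shorten along the ``short side'' of the matrix (the side of length $\min\{m,n\}$), since shortening along the long side would fail to be injective and would give a weaker, incorrect bound.
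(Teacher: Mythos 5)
Your proof is correct. The paper does not actually prove this statement --- it is quoted from Delsarte's 1978 article as a known result --- so there is no in-paper argument to compare against; your shortening argument (reduce to $m\le\nobreak n$ by transposition, project onto the first $m-d+1$ rows, and observe that two codewords with the same projection would differ by a matrix of rank at most $d-1$) is the standard and complete proof of the bound, and every step, including the remark that $d\le\min\{m,n\}$ so the exponent is nonnegative, is sound.
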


When equality holds, we say that $\C$ is a \emph{maximum rank distance} (\emph{MRD} for short) code.

We will be mainly interested in $\fq$-\emph{linear} rank-metric codes, that is $\fq$-subspaces of $\mathbb F_q^{m\times n}$.
Two $\fq$-linear rank-metric codes $\C$ and $\C'$ in $\mathbb{F}_q^{m\times n}$ are \emph{equivalent} if and only if there exist $X \in \mathrm{GL}(m,q)$, $Y \in \mathrm{GL}(n,q)$, and a field automorphism $\sigma$ of $\F_q$ such that
\[\C'=\{XC^\sigma Y \colon C \in \C\}.\]
The \emph{left} and \emph{right idealizers} of a rank-metric code $\mathcal{C}\subseteq\F_{q}^{m\times n}$ are defined as
\[ L(\C)=\{ Y \in \F_q^{m \times m} \colon YC\in \C\hspace{0.1cm} \text{for all}\hspace{0.1cm} C \in \C\},\]
\[ R(\C)=\{ Z \in \F_q^{n \times n} \colon CZ\in \C\hspace{0.1cm} \text{for all}\hspace{0.1cm} C \in \C\},\]
respectively.
They are powerful tools to study the equivalence issue among rank-metric codes.
These notions have been introduced by  Liebhold and Nebe in \cite[Definition 3.1]{LN2016}; they are invariant under equivalences of rank-metric codes. Further invariants have been introduced in \cite{GiuZ,NPH2}.
In \cite{LTZ2}, idealizers have been studied in details (under the name of \emph{middle} and \emph{right nuclei}) and the following result has been proved.

\begin{thm}\label{th:propertiesideal}\cite{LTZ2}
Let $\mathcal{C}$ and $\mathcal{C}^\prime$ be $\fq$-linear rank-metric codes of $\mathbb{F}_q^{m\times n}$.
\begin{itemize}
    \item If $\mathcal{C}$ and $\mathcal{C}^\prime$ are equivalent, then their left and right idealizers are isomorphic as $\fq$-algebras (\cite[Proposition 4.1]{LTZ2}).
    \item Let $\mathcal{C}$ be an MRD code with minimum distance $d>1$.
    If $m \leq n$, then $L(\C)$ is a finite field with $|L(\C)|\leq q^m$.
If $m \geq n$, then $R(\C)$ is a finite field with $|R(\C)|\leq q^n$.
In particular, when $m=n$, $L(\C)$ and $R(\C)$ are both finite fields (\cite[Theorem 5.4 and Corollary 5.6]{LTZ2}).
\end{itemize}
\end{thm}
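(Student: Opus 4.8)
The statement has two parts and I would treat them in turn.

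For the first part, the plan is to transport idealizers along the equivalence. Write $\mathcal{C}'=\{XC^\sigma Y\colon C\in\mathcal{C}\}$ with $X\in\GL(m,q)$, $Y\in\GL(n,q)$, $\sigma\in\mathrm{Aut}(\F_q)$. I would check by a one-line computation that the twisted conjugation $T\mapsto XT^\sigma X^{-1}$ carries $L(\mathcal{C})$ bijectively onto $L(\mathcal{C}')$: if $TC\in\mathcal{C}$ for all $C\in\mathcal{C}$ then $(XT^\sigma X^{-1})(XC^\sigma Y)=X(TC)^\sigma Y\in\mathcal{C}'$, and the inverse equivalence gives the reverse inclusion. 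This map respects sums, products and the identity, and intertwines the $\F_q$-scalar actions up to the field automorphism $\sigma$, hence induces an isomorphism of $\F_q$-algebras; the analogous map $Z\mapsto Y^{-1}Z^\sigma Y$ does the job for the right idealizers. This is really just a concrete form of the Liebhold--Nebe invariance already quoted above.

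For the second part the strategy is: show that $L(\mathcal{C})$ is a finite $\F_q$-algebra with identity in which every nonzero element is invertible, conclude by Wedderburn's little theorem that it is a field, and then bound its order. That $L(\mathcal{C})$ is an $\F_q$-subalgebra of $\F_q^{m\times m}$ containing $I$ is immediate from $\F_q$-linearity of $\mathcal{C}$. A finite ring with $1$ in which every nonzero element is invertible is a field, so it suffices to rule out $Y\in L(\mathcal{C})$ with $1\le r:=\mathrm{rk}\,Y<m$. Given such a $Y$, set $K=\ker Y$ and $W=\operatorname{im}Y$ (so $\dim_{\F_q}K=m-r$, $\dim_{\F_q}W=r$) and look at the $\F_q$-linear map $\mu_Y\colon\mathcal{C}\to\mathcal{C}$, $C\mapsto YC$: its kernel is the shortened code $\mathcal{C}_K=\{C\in\mathcal{C}\colon\operatorname{im}C\subseteq K\}$ and its image lies in $\mathcal{C}_W=\{C\in\mathcal{C}\colon\operatorname{im}C\subseteq W\}$, so $|\mathcal{C}|\le|\mathcal{C}_W|\cdot|\mathcal{C}_K|$.

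The crucial estimate is that, for a subspace $U\subseteq\F_q^m$ with $\dim_{\F_q}U=u$, the shortened code $\mathcal{C}_U$ has minimum distance $\ge d$, so regarding it inside $\F_q^{u\times n}$ (with $u\le m\le n$) Theorem~\ref{th:Singleton} yields $|\mathcal{C}_U|\le q^{n(u-d+1)}$ when $u\ge d$, while $\mathcal{C}_U=\{0\}$ when $u<d$ (a nonzero codeword there would have rank $<d$). Plugging $U=W$ and $U=K$, using $|\mathcal{C}|=q^{n(m-d+1)}$ since $\mathcal{C}$ is MRD, and splitting into the cases $\{r\ge d,\ m-r\ge d\}$, $\{r<d\}$, $\{m-r<d,\ r\ge d\}$, in each case the inequality $|\mathcal{C}|\le|\mathcal{C}_W||\mathcal{C}_K|$ collapses to $d\le1$, or to $r\le 0$, or to $r\ge m$ --- all impossible. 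Hence $L(\mathcal{C})$ is a field; since $\F_q I\subseteq L(\mathcal{C})$, the space $\F_q^m$ becomes an $L(\mathcal{C})$-vector space, so $[L(\mathcal{C}):\F_q]\mid m$ and $|L(\mathcal{C})|\le q^m$. The case $m\ge n$ for $R(\mathcal{C})$ follows by transposing: $\mathcal{C}^{\top}$ is MRD in $\F_q^{n\times m}$ with $n\le m$, and $R(\mathcal{C})$ is anti-isomorphic to $L(\mathcal{C}^{\top})$, hence isomorphic to it since fields are commutative, so the previous case applies with $m$ and $n$ swapped.

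I expect the only real work --- and the main obstacle --- to be the bookkeeping in this case analysis: one must make sure that on whichever of the two sides ($W$ or $K$) the Singleton estimate is needed, the relevant dimension is genuinely at least $d$, the degenerate situations $\dim U<d$ (where $\mathcal{C}_U=\{0\}$) being handled by a slightly different counting. Beyond the Singleton bound and Wedderburn's theorem no deeper ingredient is involved.
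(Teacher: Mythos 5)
The paper gives no proof of this theorem: it is quoted from the literature with pointers to \cite[Proposition 4.1, Theorem 5.4, Corollary 5.6]{LTZ2}, so there is no in-paper argument to compare against; your proposal is therefore judged on its own, and it is correct and self-contained. For the first bullet, the transport maps $T\mapsto XT^{\sigma}X^{-1}$ and $Z\mapsto Y^{-1}Z^{\sigma}Y$ are exactly the Liebhold--Nebe computation \cite{LN2016}; the only cosmetic point is that these maps are $\sigma$-semilinear over $\F_q$ rather than linear, which is the usual (and harmless) reading of ``isomorphic as $\F_q$-algebras'' in this context. For the second bullet, your shortening-plus-Singleton count does work: $\ker\mu_Y=\mathcal{C}_K$ and $\operatorname{im}\mu_Y\subseteq\mathcal{C}_W$ give $|\mathcal{C}|\le|\mathcal{C}_K|\,|\mathcal{C}_W|$; viewing $\mathcal{C}_U$ inside $\F_q^{u\times n}$ with $u\le m\le n$ and minimum distance $\ge d$, Theorem~\ref{th:Singleton} gives $|\mathcal{C}_U|\le q^{n(u-d+1)}$ for $u\ge d$ and $\mathcal{C}_U=\{0\}$ for $u<d$; and the three cases do collapse to the contradictions $d\le 1$, $r\le 0$, $r\ge m$. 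Two standard steps are used implicitly and worth stating: once every nonzero $Y\in L(\mathcal{C})$ is known to be an invertible matrix, the inverse lies in $L(\mathcal{C})$ because left multiplication by $Y$ is an injective, hence surjective, map of the finite set $L(\mathcal{C})$ to itself (only then does Wedderburn apply); and the bound $|L(\mathcal{C})|\le q^m$ comes from $\F_q^m$ being a nonzero vector space over the field $L(\mathcal{C})$. The transpose reduction for $R(\mathcal{C})$ is also correct. Altogether this is a legitimate proof, of the same flavour as the counting argument in \cite{LTZ2}, and nothing deeper than the Singleton bound and Wedderburn's theorem is needed.
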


We may see the nonzero elements of an $\fq$-linear rank-metric code $\mathcal{C}$ with parameters $(m,n,q;d)$ as:
\begin{itemize}
    \item matrices of $\F_q^{m\times n}$ having rank at least $d$ and with at least one matrix of rank exactly $d$;
    \item $\fq$-linear maps $V\to W$ where $V=V(n,q)$ and $W=V(m,q)$, having usual map rank at least $d$ and with at least one map of rank exactly $d$;
    \item when $m=n$, elements of the $\fq$-algebra $\mathcal{L}_{n,q}$ of $q$-polynomials over $\fqn$ modulo $x^{q^n}-x$, having rank at least $d$ and with at least one polynomial of rank exactly $d$.
\end{itemize}

\section{On the stabilizer of low weight polynomials}\label{s:Sf}

In this section we will study the action of the group $(\mathbb{F}_{q^n}^{2\times 2},+)$ on the graph of a low weight linearized polynomial $f$, by showing that the stabilizer $\mathbb{S}_f$ of its graph is a field. The following lemma is clear.
\begin{lem}\cite[Lemma 7.2]{BZZ}
  Let $f $ and $g $ be two $q$-polynomials over $\fqn$. If $f $ and
  $g $ are equivalent then $\mathbb{S}_f$ and $\mathbb{S}_g$ are isomorphic.   
\end{lem}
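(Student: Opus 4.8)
The plan is to build an explicit isomorphism by transporting matrices through the equivalence $\varphi\in\GaL(2,q^n)$ carrying $\mathcal G_f$ onto $\mathcal G_g$. First I would unpack the hypothesis: there are $A_0\in\GL(2,q^n)$ and $\sigma\in\mathrm{Aut}(\fqn)$ such that, writing $\varphi(v)=A_0\,v^{\sigma}$ for column vectors $v\in\fqn^{2}$ (where $v^\sigma$, and more generally $M^\sigma$ for a matrix $M$, means applying $\sigma$ entrywise), one has $\varphi(\mathcal G_f)=\mathcal G_g$; note also $\varphi^{-1}(v)=(A_0^{-1}v)^{\sigma^{-1}}$ and $\varphi^{-1}(\mathcal G_g)=\mathcal G_f$. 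Then I would define
\[
\Phi\colon \fqn^{2\times 2}\to \fqn^{2\times 2},\qquad \Phi(A)=A_0\,A^{\sigma}\,A_0^{-1}.
\]

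Next I would record the formal properties of $\Phi$, all of which are routine. Additivity is immediate from $(A+B)^\sigma=A^\sigma+B^\sigma$; multiplicativity follows from $(AB)^\sigma=A^\sigma B^\sigma$ and the cancellation of $A_0^{-1}A_0$ in the middle; and $\Phi$ is a bijection of $\fqn^{2\times 2}$ with inverse $B\mapsto (A_0^{-1}BA_0)^{\sigma^{-1}}$. Since $\fq$ is the unique subfield of $\fqn$ of order $q$, it is fixed setwise by $\sigma$, so $\Phi$ is also compatible with the scalar multiplication $\star$ up to the automorphism $\sigma|_{\fq}$ of $\fq$; hence $\Phi$ is an isomorphism of $\fq$-algebras (in particular of rings).

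The substantive point is that $\Phi$ maps $\mathbb S_f$ into $\mathbb S_g$. Let $A\in\mathbb S_f$ and let $v\in\mathcal G_g$; using $\varphi(\mathcal G_f)=\mathcal G_g$ write $v=\varphi(w)=A_0\,w^{\sigma}$ with $w\in\mathcal G_f$. Then
\[
\Phi(A)\,v=A_0\,A^{\sigma}\,A_0^{-1}\,A_0\,w^{\sigma}=A_0\,(Aw)^{\sigma}=\varphi(Aw),
\]
and since $A\in\mathbb S_f$ we have $Aw\in\mathcal G_f$, so $\Phi(A)\,v=\varphi(Aw)\in\varphi(\mathcal G_f)=\mathcal G_g$. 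As $v$ ranges over $\mathcal G_g$ this gives $\Phi(A)\mathcal G_g\subseteq\mathcal G_g$, i.e.\ $\Phi(A)\in\mathbb S_g$.

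Finally I would run the identical computation with $\varphi^{-1}$ in place of $\varphi$ to get $\Phi^{-1}(\mathbb S_g)\subseteq\mathbb S_f$, so that $\Phi$ restricts to a bijection $\mathbb S_f\to\mathbb S_g$; being the restriction of a ring (and $\fq$-algebra) isomorphism, it is the desired isomorphism $\mathbb S_f\cong\mathbb S_g$. I do not expect any real obstacle: the only thing needing care is bookkeeping of the semilinear part $\sigma$ and verifying both inclusions $\Phi(\mathbb S_f)\subseteq\mathbb S_g$ and $\Phi^{-1}(\mathbb S_g)\subseteq\mathbb S_f$ rather than just one, which is why the statement is declared clear.
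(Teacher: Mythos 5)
Your proof is correct and is precisely the conjugation argument $A\mapsto A_0A^{\sigma}A_0^{-1}$ that the paper has in mind when it declares the lemma ``clear'' (citing [BZZ, Lemma 7.2]) without writing out a proof; all the verifications (additivity, multiplicativity, the two inclusions, and the $\sigma|_{\fq}$-semilinearity over $\fq$) are handled properly. Nothing to add.
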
  
We now prove that $\mathbb{S}_f$ is stable under the sum and the product of matrices.

\begin{prop}\label{p:semis}
If $A,B\in \mathbb{S}_f$, then $A+B,\ AB \in \mathbb{S}_f$.
\end{prop}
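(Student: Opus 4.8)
\emph{Proof proposal.} The plan is to use only two facts: that $\mathcal{G}_f$ is an $\fq$-subspace of $\fqn^{2}$ — equivalently, that $f$ is additive, so $f(y)+f(z)=f(y+z)$ — and that belonging to $\mathbb{S}_f$ means exactly that $A\mathcal{G}_f\subseteq\mathcal{G}_f$.

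Closure under the product is immediate, and I would dispatch it first: if $A,B\in\mathbb{S}_f$, then
\[
(AB)\mathcal{G}_f=A\bigl(B\mathcal{G}_f\bigr)\subseteq A\mathcal{G}_f\subseteq\mathcal{G}_f,
\]
so $AB\in\mathbb{S}_f$; this step needs nothing beyond the defining inclusion. For closure under the sum I would argue pointwise. Let $y\in\fqn$ and put $v=\begin{pmatrix} y\\ f(y)\end{pmatrix}\in\mathcal{G}_f$. Since $A,B\in\mathbb{S}_f$, there exist $z,w\in\fqn$ with $Av=\begin{pmatrix} z\\ f(z)\end{pmatrix}$ and $Bv=\begin{pmatrix} w\\ f(w)\end{pmatrix}$. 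Then
\[
(A+B)v=Av+Bv=\begin{pmatrix} z+w\\ f(z)+f(w)\end{pmatrix}=\begin{pmatrix} z+w\\ f(z+w)\end{pmatrix}\in\mathcal{G}_f,
\]
where the third equality is the additivity of $f$. As $y\in\fqn$ was arbitrary, $(A+B)\mathcal{G}_f\subseteq\mathcal{G}_f$, i.e.\ $A+B\in\mathbb{S}_f$.

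I do not expect any genuine obstacle here: the statement is a direct consequence of the $\fq$-linearity — in fact, already of the additivity — of $f$. The only point worth stressing is the division of labour: the product closure uses merely the inclusion defining $\mathbb{S}_f$, whereas the sum closure is precisely where the additive structure of $\mathcal{G}_f$ enters. Together with the evident closure under scalar multiplication by elements of $\fq$, this is what underlies the $\fq$-algebra structure on $(\mathbb{S}_f,+,\cdot,\star)$ recorded in the Introduction.
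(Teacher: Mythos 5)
Your proof is correct and follows essentially the same route as the paper: the sum closure is the identical pointwise computation using the additivity of $f$, and your set-level chain $(AB)\mathcal{G}_f=A(B\mathcal{G}_f)\subseteq A\mathcal{G}_f\subseteq\mathcal{G}_f$ is just a tidier phrasing of the "same argument" the paper invokes for the product. Nothing is missing.
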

\begin{proof}
    Since $A\mathcal{G}_f$ and $B\mathcal{G}_f$ are both contained in $\mathcal{G}_f$, for any $x\in\Fqn$ there are $y,z\in\Fqn$ such that
    \[
      A\begin{pmatrix}x\\ f(x)\end{pmatrix}=\begin{pmatrix}y\\ f(y)\end{pmatrix}\quad\text{and}\quad
      B\begin{pmatrix}x\\ f(x)\end{pmatrix}=\begin{pmatrix}z\\ f(z)\end{pmatrix},
    \]
    hence
    \[(A+B)\begin{pmatrix}x\\ f(x)\end{pmatrix}=\begin{pmatrix}y+z\\ f(y)+f(z)\end{pmatrix}=\begin{pmatrix}y+z\\ f(y+z)\end{pmatrix},\]
so that $A+B \in \mathbb{S}_f$.
The same argument can be performed for the product.
\end{proof}

We are now ready to prove the main result of this section.

\begin{thm}\label{thm:Gfofield}
Let $f $ be a $q$-polynomial in $\mathcal{L}_{n,q}$.
If $f$ is a low weight polynomial, then $(\mathbb{S}_f,+,\cdot)$ is a field.
\end{thm}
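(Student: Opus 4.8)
The plan is to show that $\mathbb{S}_f$, which by Proposition~\ref{p:semis} is already a subring of $\fqn^{2\times 2}$ containing the identity, is a commutative division ring, hence a field by Wedderburn. The key input will be the low weight hypothesis~\eqref{e:lw}: I will show that every nonzero $A\in\mathbb{S}_f$ is invertible, and that the whole of $\mathbb{S}_f$ is commutative. The geometric picture to exploit is that $\mathbb{S}_f$ acts on $\mathcal{G}_f$ by $\fqn$-linear maps of the plane $\mathrm{AG}(2,q^n)$ (equivalently $\fq$-linear maps of $\fqn^2$), and $\mathcal{G}_f$ spans the plane, so a matrix in $\mathbb{S}_f$ is determined by its action on $\mathcal{G}_f$.

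First I would argue that a nonzero $A\in\mathbb{S}_f$ is invertible. Suppose $A$ is singular; then $\ker A$ is a nonzero $\fqn$-subspace of $\fqn^2$, so it is either a line or all of $\fqn^2$; the latter is excluded since $A\neq 0$. So $\ker A$ is a $1$-dimensional $\fqn$-subspace, i.e. an affine line $\ell$ through the origin. Since $A\mathcal{G}_f\subseteq\mathcal{G}_f$, the map $A$ collapses $\mathcal{G}_f$ onto $A\mathcal{G}_f$, and the fibres of $A|_{\mathcal{G}_f}$ are the nonempty intersections $(\mathcal{G}_f+v)\cap\ker$-direction translates — more precisely, two points $P,Q\in\mathcal{G}_f$ have $AP=AQ$ iff $P-Q\in\ker A=\ell$, i.e. iff the line $PQ$ is parallel to $\ell$. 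Thus the fibres of $A|_{\mathcal{G}_f}$ are exactly the sets $\mathcal{G}_f\cap m$ as $m$ ranges over the affine lines in the direction of $\ell$. Hence $|\mathcal{G}_f|=q^n$ is the sum, over those lines $m$, of $|\mathcal{G}_f\cap m|=|A\mathcal{G}_f|$... wait, the fibres need not all have the same size; but $A\mathcal{G}_f$ is an $\fq$-subspace of dimension $n-\dim_{\fq}(\ker A\cap\mathcal{G}_f)$, and since $\ker A$ is an $\fqn$-line, $\ker A\cap\langle\mathcal{G}_f\rangle$ has some $\fq$-dimension $k\ge 1$; then $|A\mathcal{G}_f|=q^{n-k}$ while the generic fibre has size $q^k$, and $q^k=|\ell\cap\mathcal{G}_f|$ forces $q^k<q^{n/2}$ and $q^{n-k}<q^{n/2}$ by~\eqref{e:lw} applied to the image line as well — but $q^k\cdot q^{n-k}=q^n=|\mathcal{G}_f|\le q^{n/2}\cdot q^{n/2}$ is consistent, so I need a sharper argument. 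The clean way: $A\mathcal{G}_f$ is contained in $\mathcal{G}_f$ and is an $\fqn$-linear image, so it lies on a line (since $\mathrm{rk}_{\fqn}A=1$), namely $\im A$; thus $\mathcal{G}_f\cap\im A\supseteq A\mathcal{G}_f$ has at least $q^{n-k}$ points. Now $\im A$ is an affine line through $0$, so~\eqref{e:lw} gives $q^{n-k}\le |\mathcal{G}_f\cap\im A|<q^{n/2}$, hence $n-k<n/2$, i.e. $k>n/2$; but also $k=\dim_{\fq}(\ker A\cap\langle\mathcal{G}_f\rangle_{\fqn})\le\dim_{\fq}(\mathcal{G}_f\cap\ker A)$ — and $\ker A$ being an affine line, $\mathcal{G}_f\cap\ker A$ has $q^k$ points with $q^k<q^{n/2}$ by~\eqref{e:lw}, so $k<n/2$, a contradiction. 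Therefore $A$ is nonsingular, and since $\mathbb{S}_f$ is finite and closed under products, $A^{-1}\in\mathbb{S}_f$: indeed the sequence $A, A^2, A^3,\dots$ lies in the finite set $\mathbb{S}_f$, so $A^i=A^{i+j}$ for some $i\ge1,j\ge1$, and cancelling the invertible $A^i$ gives $A^j=I$, whence $A^{-1}=A^{j-1}\in\mathbb{S}_f$.

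Next, commutativity. Having shown $\mathbb{S}_f$ is a finite ring in which every nonzero element is a unit, it is a division ring, so by Wedderburn's little theorem it is a field; this is the cleanest route and avoids a direct commutativity computation. If one instead wanted to avoid Wedderburn, one could note that $\mathbb{S}_f$ is a subalgebra of $\fqn^{2\times2}$ and a division ring of dimension at most $4$ over $\fq$ with $\fqn$ in an appropriate sense acting, but invoking Wedderburn is by far the shortest. So the structure of the proof is: (i) Proposition~\ref{p:semis} gives a ring; (ii) the low-weight dichotomy on line intersections forces every nonzero element to be nonsingular; (iii) finiteness upgrades nonsingular to invertible-within-$\mathbb{S}_f$; (iv) Wedderburn finishes.

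The main obstacle is step (ii): making the counting on the fibres of a rank-one $A\in\mathbb{S}_f$ tight enough to derive a contradiction from~\eqref{e:lw}. The crucial observation is that a singular $A$ would force $\mathcal{G}_f$ to meet \emph{both} the kernel line and the image line of $A$ in large intersections ($q^k$ and $q^{n-k}$ points respectively, with $k+(n-k)=n$), so at least one of these exceeds $q^{n/2}$, contradicting low weight. Once this is set up correctly the rest is formal, relying on the identification of $\mathbb{S}_f$ with a finite associative unital $\fq$-algebra and on Wedderburn's theorem; I would also double-check that $I\in\mathbb{S}_f$ (immediate, since $I\mathcal{G}_f=\mathcal{G}_f$) so that "division ring" is the right notion.
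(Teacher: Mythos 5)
Your proof is correct and follows essentially the same route as the paper: both arguments show that a rank-one matrix $M$ stabilizing $\mathcal{G}_f$ would force $\dim_{\fq}(\ker M\cap\mathcal{G}_f)$ and $\dim_{\fq}(\im M\cap\mathcal{G}_f)$ to sum to at least $n$, while the low weight hypothesis bounds each strictly below $n/2$, a contradiction. The only differences are cosmetic: you make explicit the step that invertible elements of $\mathbb{S}_f$ have their inverses in $\mathbb{S}_f$ (via powers of $A$) and the appeal to Wedderburn, both of which the paper leaves implicit; your initial detour through fibre counting is harmlessly superseded by the clean argument you settle on.
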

\begin{proof}
    By Proposition~\ref{p:semis}, $\mathbb{S}_f$ is closed under the sum and the product of matrices.
    Since $\mathbb{S}_f$ contains the opposite of any of
    its elements and the inverse of its invertible matrices, it
    only remains to prove that if $A\in \mathbb{S}_f$, $A\neq O$,
    then $A\in \mathrm{GL}(2,q^n)$. 
    So, it is enough to prove
    that for any rank-one $2\times 2$ matrix $M$ with elements in $\Fqn$,
    $M\mathcal{G}_f$ is not contained in $\mathcal{G}_f$.
    To this aim, recall that from Section \ref{sec:linsetgraph} that since $f$ is a low weight polynomial, 
    $w_{L_f}(\langle v \rangle_{\fqn})=\dim_{\fq}(\langle v \rangle_{\fqn} \cap \mathcal{G}_f)<n/2$ for any  $v\in\mathbb F_{q^n}^2$.
    Let $Z\neq O$ be a column such that $MZ=O$ and let $C$ be a nonzero 
    column of $M$.

    Define $\mu: \mathcal{G}_f\rightarrow \F_{q^n}^2$, $(y,f(y))\mapsto M \left(\begin{array}{cc}  y\\ f(y)\end{array}\right)$.
    Since $\ker\mu\subseteq\la Z\ra_{\Fqn}\cap \mathcal{G}_f$, the $\Fq$-dimension
    of $\ker \mu$ is less than $n/2$ since $f$ is a low weight polynomial; so,
    $\dim_{\Fq}(\im\mu)>n/2$.

    Assume now that $M\mathcal{G}_f\subseteq \mathcal{G}_f$.
    This would imply $\im\mu\subseteq\la C\ra_{\Fqn} \cap \mathcal{G}_f$,
    contradicting $w_{L_f}(\la C\ra_{\Fqn})<n/2$.
\end{proof}

Theorem \ref{thm:Gfofield} allows us to find a large class of polynomials
for which $\mathbb S_f$ is a field.

\begin{prop}\label{lowDegree}
    Let $f $ be a $q$-polynomial in $\mathcal{L}_{n,q}$.
If $f$ has $q$-degree $k$ with $1<k<n/2$ then it is a low weight polynomial.
In particular $\mathbb S_f$ is a field.
\end{prop}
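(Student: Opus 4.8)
The plan is to reduce Proposition~\ref{lowDegree} to Theorem~\ref{thm:Gfofield} by showing that any $q$-polynomial $f\in\mathcal L_{n,q}$ of $q$-degree $k$ with $1<k<n/2$ is a low weight polynomial, i.e.\ that every affine line $\ell$ meets $\mathcal G_f$ in fewer than $q^{n/2}$ points. Equivalently, using the dictionary of Section~\ref{sec:linsetgraph}, it suffices to prove that every point of the linear set $L_f$ has weight strictly less than $n/2$. So I would work directly with the weight of a point $\la v\ra_{\fqn}$ in $L_f$.

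First I would recall that, by the correspondence in Section~\ref{sec:linsetgraph}, for a point $P=\la(1,m,0)\ra_{\fqn}$ the weight $w_{L_f}(P)$ equals $\dim_{\fq}\{y\in\fqn: f(y)=my\}$, i.e.\ the $\fq$-dimension of the kernel of the linearized polynomial $f(x)-mx$; the point $\la(0,1,0)\ra_{\fqn}$ does not lie in $L_f$ since $f$ has no point of the form $(0,f(y))$ with $y\ne 0$ mapped there — more precisely its weight is $\dim_{\fq}\ker(x\mapsto x$ restricted suitably$)=0$ because the first coordinate $y$ vanishes only for $y=0$. So the only points whose weight I must bound are those of the form $\la(1,m,0)\ra_{\fqn}$, and the weight there is the $\fq$-dimension of $\ker(f-mx)$ as an $\fq$-linear endomorphism of $\fqn$.

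The key step is then the standard fact that a nonzero linearized polynomial of $q$-degree $k$ has at most $q^k$ roots in $\fqn$, hence its kernel has $\fq$-dimension at most $k$. Now $f-mx$ is a $q$-polynomial whose $q$-degree is exactly $k$ as long as $k\ge 1$ (the leading term $a_kx^{q^k}$ is untouched by subtracting $mx$, and $k>1$ guarantees $k\neq 1$ so there is genuinely nothing to worry about; even if $k=1$ one would need $a_1-m\ne0$, but here $k>1$). Since $k<n/2$, we get $w_{L_f}(P)=\dim_{\fq}\ker(f-mx)\le k<n/2$ for every such point, and combined with the weight-zero observation for the infinite point, every point of $L_f$ has weight less than $n/2$. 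By the characterization recalled at the end of Section~\ref{sec:linsetgraph}, this is exactly the statement that $f$ is a low weight polynomial. Applying Theorem~\ref{thm:Gfofield} then gives that $(\mathbb S_f,+,\cdot)$ is a field, which is the ``in particular'' clause.

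I do not expect a serious obstacle here: the only point requiring care is bookkeeping about which projective points can have positive weight and confirming the $q$-degree of $f-mx$ is unchanged (which is immediate from $k>1$). The mild subtlety is purely notational — matching the ``weight of a point of $L_f$'' language of Section~\ref{sec:linsetgraph} with the ``$|\ell\cap\mathcal G_f|<q^{n/2}$'' definition of low weight — and that translation is already spelled out in the preliminaries, so the proof is short.
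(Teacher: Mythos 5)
Your proof is correct and follows essentially the same route as the paper: identify the weight of a point $\langle(1,m)\rangle_{\fqn}$ with $\dim_{\fq}\ker(f-mx)$ and bound this by the $q$-degree $k<n/2$ of $f-mx$, which is unchanged since $k>1$. The extra bookkeeping you do for the point $\langle(0,1)\rangle_{\fqn}$ is harmless and the appeal to Theorem~\ref{thm:Gfofield} for the final clause matches the paper exactly.
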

\begin{proof}
    By Section \ref{sec:linsetgraph}, we need the prove that the points in $L_f$ have weight at most $n/2-1$.
    Observing that 
    \[ w_{L_f}(\langle (1,m) \rangle_{\mathbb{F}_{q^n}})= \dim_{\mathbb{F}_q}(\ker(f -mx)) \]
    and that the $q$-degree of $f -mx$ is at most $n/2-1$, the assertion follows.
\end{proof}

As shown in the next examples, Theorem \ref{thm:Gfofield} cannot be improved in general.

\newpage

\begin{ex}\label{ex:traceSf}
Let $n=tt'$, $t'\ge2$, and let $f =\Tr_{q^n/q^t}(x)=x+x^{q^t}+\ldots+x^{q^{n-t}}$ be the trace function over $\F_{q^t}$.
    One point of $\mathcal{D}_f$ has weight $n-t$, and the remaining ones have weight $t$.
    Therefore, $f$ is not a low weight polynomial.
    The matrix $\begin{pmatrix}a&b\\ c&d\end{pmatrix}$
    is an element of $\mathbb{S}_f$ if and only if for any $x \in \fqn$ there exists $y \in \fqn$ such that
    \[
      \begin{pmatrix}a&b\\ c&d\end{pmatrix}\begin{pmatrix}x\\ f(x)\end{pmatrix}=\begin{pmatrix}y\\ f(y)\end{pmatrix},
    \]
    from which we obtain the following polynomial identity
    \begin{equation}\label{eq:Sftrace} cx+(d-f(b))\sum_{j=0}^{t'-1}x^{q^{tj}}-\sum_{j=0}^{t'-1}a^{q^{tj}}x^{q^{tj}}=0. 
    \end{equation}
    From \eqref{eq:Sftrace} in the case $t'=2$ we only get two equations
    \[\left\{ \begin{array}{ll} c+d-f(b)-a=0,\\ d-f(b)-a^{q^t}=0, \end{array}\right.\]
    which imply that 
    \[\mathbb{S}_f=\left\{ \begin{pmatrix}
        a & b\\ a-a^{q^t} & f(b)+a^{q^t}
    \end{pmatrix} \colon a,b \in \fqn \right\}.\]
    From \eqref{eq:Sftrace} in the case $t'>2$, taking into account the coefficients
    of $x^{q^t}$ and $x^{q^{2t}}$ we get the following equations
    \[\left\{ \begin{array}{ll} d-f(b)-a^{q^t}=0,\\ d-f(b)-a^{q^{2t}}=0, \end{array}\right.\]
    which imply that $a\in \Fqt$.
    By comparing the coefficients of $x$ and $x^{q^t}$,
        \[\left\{ \begin{array}{ll} c=0,\\ d-f(b)-a=0. \end{array}\right.\]
Hence
    \[\mathbb{S}_f=\left\{ \begin{pmatrix}
        a & b\\ 0 & f(b)+a
    \end{pmatrix} \colon a \in \Fqt, b \in \fqn \right\}.\]
    In both cases, $\mathbb{S}_f$ contains non invertible nonzero matrices and hence it is not a field.
    In particular for $t'=2$ this shows that the bound~\eqref{e:lw} in
    the definition of a low weight polynomial is sharp in a sense.
\end{ex}

The next example shows that for every divisor $t$ of $n$ we can find a linearized polynomial $f$ for which the stabilizer of its graph is a field isomorphic to $\F_{q^t}$.

\begin{ex}
Let $n=t't$ with $t'>2$, $t\geq2$, and let $f =x^q+\eta x^{q^{t+1}} \in \mathcal{L}_{n,q}$ for $\eta \in \mathbb{F}^*_{q^n}$.
Then $f$ is an R-$q^t$-partially scattered polynomial if and only if $\N_{q^n/q^t}(\eta)\ne -1$, because of \cite[Proposition 2.10]{LZ2}. 
For all $\eta$, $f $ has $q$-degree strictly less than $n/2$ and by Proposition \ref{lowDegree} $f $ is a low weight polynomial and so $\mathbb{S}_f$ is a field which we can compute explicitly.
Our aim is to find the  matrices $\begin{pmatrix} a&b\\c&d \end{pmatrix}$ in $\mathbb{F}_{q^n}^{2\times 2}$ for which for every $x \in \fqn$ there exists $y \in \fqn$ such that
\[ \begin{pmatrix} a&b\\c&d \end{pmatrix} \begin{pmatrix} x\\
x^q+\eta x^{q^{t+1}}
\end{pmatrix}=\begin{pmatrix} y\\
y^q+\eta y^{q^{t+1}}
\end{pmatrix}, \]
from which we get the following polynomial identity
\begin{align}
&cx+d(x^q+\eta x^{q^{t+1}})=a^qx^q+b^q(x^{q^2}+\eta^q x^{q^{t+2}})+ \notag\\
& \eta [a^{q^{t+1}}x^{q^{t+1}}+b^{q^{t+1}}(x^{q^{t+2}}+\eta^{q^{t+1}} x^{q^{2t+2}})],\label{e:ognit}
\end{align}
and so we obtain $c=b=0$ and $d=a^q=a^{q^{t+1}}$, i.e. 
\begin{equation}\label{e:aaq}
\mathbb{S}_f=\left\{ \begin{pmatrix} a & 0 \\ 0 & a^q\end{pmatrix} \colon a \in \Fqt \right\}.
\end{equation}
\end{ex}

When $t'=2$ in the above example we have a linearized polynomial which is not a low weight polynomial. 
However if $t>2$ and $\N_{q^{2t}/q^t}(\eta)\neq1$, 
then $\mathbb S_f$ is once again a field, for
from~\eqref{e:ognit} reduced $\mod{x^{q^{2t}}-x}$ it holds
\[
\begin{cases}
c&=0\\ d&=a^q\\ 0&=b^q+b^{q^{t+1}}\eta^{q^{t+1}+1}\\
d&=a^{q^{t+1}}\\ 0&=b^q\eta^q+b^{q^{t+1}}\eta.
\end{cases}
\]
By combining the third and fifth equation, $b^q(1-\eta^{q+q^{t+1}})=0$,
leading to $b=0$ and by the second and fourth equation we get~\eqref{e:aaq}. If $\eta^{q^{t}+1}=1$, then third and fifth equation are equivalent and, by $\eta=s^{q^{t}-1}$, $s\in\mathbb{F}_{q^{2t}}^*$, we get
$$0=b^{q}+b^{q^{t+1}}s^{q-q^{t+1}+q^{t}-1}=b^{q}+b^{q^{t+1}}s^{(1-q)(q^{t}-1)},$$
that is $b^q \in \ker(x^{q^t}+s^{(1-q)(q^t-1)}x)$ which has dimension $t$.
Therefore,
\begin{equation*}
\mathbb{S}_f=\left\{ \begin{pmatrix} a & b \\ 0 & a^q\end{pmatrix} \colon a \in \Fqt, b^q \in \ker(x^{q^t}+s^{(1-q)(q^t-1)}x) \right\}.
\end{equation*}Since $\mathbb{S}_f$ contains singular nonzero matrices it is not a field.



\section{Examples of low weight polynomials and the related field}\label{s:PS}

We will now see examples of low weight polynomials.

The first family of low weight polynomials is given by the scattered polynomials, as in this case the maximum size of intersection between the related graph and the affine lines is $q$. We list here the known examples of polynomials for which $\mathbb{S}_f$ has been already determined,
including also some non-scattered ones.

\begin{itemize}
\item $f =x^{q^s}\in \mathcal{L}_{n,q}$ with $\gcd(s,n)=1$, then $|\mathbb{S}_f|=q^n$, see \cite[Section 6]{CMPZ};
\item $f =\delta x^{q^s}+x^{q^{n(s-1)}}\in \mathcal{L}_{n,q}$ with $\gcd(s,n)=1$, $\delta\neq0$ and $n\geq 4$, then $|\mathbb{S}_f|=q^2$ if $n$ is even  and $|\mathbb{S}_f|=q$ if $n$ is odd, see \cite[Section 6]{CMPZ} (we call these polynomials \emph{LP polynomials} even in case they are not scattered);
\item $f =\delta x^{q^s}+x^{q^{s+n/2}}\in \mathcal{L}_{n,q}$ with $\delta\neq0$, $n$ even and $\gcd(s,n)=1$, then  $|\mathbb{S}_f|=q^{n/2}$, see \cite[Corollary 5.2]{CMPZ};
\item $f =x^q+x^{q^3}+\delta x^{q^5}\in \mathcal{L}_{6,q}$ with $q$ odd and $\delta^2+\delta=1$, then $|\mathbb{S}_f|=q^{2}$, see \cite[Proposition 5.2]{CsMZ2018} and \cite[Section 4.4]{MMZ}.
\item $f =x^{q^{s}}+x^{q^{s(t-1)}}+\eta^{1+q^{s}}x^{q^{s(t+1)}}+\eta^{1-q^{s(2t-1)}}x^{q^{s(2t-1)}}\in \mathcal{L}_{n,q}$ with $q$ odd prime power, $t,s,n \in \mathbb{N}$ with $n=2t$, $t\geq5$, $\gcd(s,n)=1$ and $N_{q^{n}/q^{t}}(\eta)=-1$, then $|\mathbb{S}_f|=q^2$, see \cite[Proposition 3.4]{LZ3}.
\end{itemize}

\begin{rem}
 Note that in certain cases listed above it is not necessary to require the scattered conditions in order to get low weight polynomials. 
 See for instance the examples below.
\end{rem}

The next two examples are low weight polynomials with maximum size of intersection between the related graph and the affine lines equals $q^2$.

\begin{ex}
Let $f =x^q+\delta x^{q^{n-1}} \in \mathcal{L}_{n,q}$ 
with $n>4$ and $\N_{q^n/q}(\delta)=1$ (note that $f$ is not scattered, see \cite{Zanella}). Moreover, consider $(u,v)\in \fqn\times\fqn$ such that $u\ne 0$, then
\[ w_{L_f}(\langle (u,v) \rangle_{\fqn})=\dim_{\fq}(\ker(f -xu^{-1}v))=\ker(x^{q^2}+\delta^q x-x^qu^{-q}v^q)
\leq 2, \]
as the latter polynomial has degree $q^2$ and also note that for $u=0$ then $w_{L_f}(\langle (u,v) \rangle_{\fqn})=0$. Hence $f$ is a low weight polynomial.
Computing as in the previous examples the matrices in $\mathbb{S}_f$ we obtain that
\[\mathbb{S}_f=\left\{\left(\begin{array}{cccc} a & 0 \\
0 & a^q\end{array}\right) \colon a \in \fqn\cap \mathbb{F}_{q^2} \right\},\]
and so $|\mathbb{S}_f|$ is $q$ if $n$ is odd and $q^2$ if $n$ is even.
\end{ex}

\begin{ex}
Let $f =x^q+\delta x^{q^2} \in \mathcal{L}_{n,q}$ with $n>4$ and $\delta \ne 0$. This is not a scattered polynomial, see \cite{Zanella}. By Proposition \ref{lowDegree}, $f$ is a low weight polynomial.
Computing as in the previous examples the matrices in $\mathbb{S}_f$ we obtain that
\[\mathbb{S}_f=\left\{\left(\begin{array}{cccc} a & 0 \\
0 & a\end{array}\right) \colon a \in  \mathbb{F}_{q} \right\}.\]
\end{ex}

We now prove that the partially scattered polynomials are not far from being low weight polynomials.

\begin{prop}\label{p:weightps}
Let $t$ be a nontrivial divisor of $n$.
\begin{enumerate}[$(i)$]
\item If $f $ is a $\Rt$partially scattered polynomial in $\mathcal{L}_{n,q}$,
then $w_{L_f}(P)\le n/2$ for any point $P\in \PG(1,q^n)$.
\item If $f $ is a $\Lt$partially scattered polynomial in $\mathcal{L}_{n,q}$,
then $w_{L_f}(P)\le t$ for any point $P\in \PG(1,q^n)$.
\end{enumerate}
\end{prop}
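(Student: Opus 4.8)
The plan is to translate both statements into the language of kernels of linearized polynomials and exploit the partially scattered hypotheses directly. Recall from Section~\ref{sec:linsetgraph} that for a point $P=\la(1,m)\ra_{\fqn}$ of $\PG(1,q^n)$ one has $w_{L_f}(P)=\dim_{\fq}\ker(f-mx)$, and for $P=\la(0,1)\ra_{\fqn}$ the weight is $0$ (since $f\in\mathcal L_{n,q}$ has zero constant term and the first coordinate being $0$ forces $y=0$). So it suffices to bound $\dim_{\fq}\ker(f-mx)$ for every $m\in\fqn$. Write $W_m=\ker(f-mx)\subseteq\fqn$; this is an $\fq$-subspace, and for any two nonzero $y,z\in W_m$ we have $f(y)/y=m=f(z)/z$.

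For part $(i)$, suppose $f$ is $\Rt$partially scattered and assume for contradiction that $\dim_{\fq}W_m>n/2$ for some $m$. By the subspace dimension formula inside the $\fq$-vector space $\fqn$, the intersection $W_m\cap\alpha W_m$ is nonzero for every $\alpha\in\Fqt^*$ (here I use that $\Fqt\subseteq\fqn$ since $t\mid n$, and that $\alpha W_m$ is again an $\fq$-subspace of the same dimension $>n/2$, so the two subspaces cannot meet only in $0$). Pick $\alpha\in\Fqt\setminus\fq$ — possible because $t\ge 2$ — and a nonzero $w\in W_m\cap\alpha^{-1}W_m$; then $y=w$ and $z=\alpha w$ both lie in $W_m$, so $f(y)/y=f(z)/z$, and $y/z=\alpha^{-1}\in\Fqt$. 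Condition~\eqref{eq:condR} then forces $\alpha^{-1}\in\fq$, contradicting the choice of $\alpha$. Hence $\dim_{\fq}W_m\le n/2$ for all $m$, which is the claim. One should double-check the degenerate situation $m$ such that $W_m=\{0\}$ or $\dim_{\fq}W_m\le n/2$ trivially — there is nothing to prove there.

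For part $(ii)$, suppose $f$ is $\Lt$partially scattered and fix $m\in\fqn$ with $W_m\neq\{0\}$. Condition~\eqref{eq:condL} says that for all nonzero $y,z\in W_m$ one has $y/z\in\Fqt$; equivalently, fixing a nonzero $y_0\in W_m$, every nonzero element of $W_m$ lies in $y_0\Fqt$, so $W_m\subseteq y_0\Fqt$. But $y_0\Fqt$ is an $\fq$-subspace of $\fqn$ of dimension exactly $t$, whence $\dim_{\fq}W_m\le t$. This gives $w_{L_f}(P)\le t$ for every $P$.

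The only genuine subtlety — and the step I would be most careful with — is the pigeonhole argument in part $(i)$: one must be sure to take $\alpha$ in $\Fqt$ rather than in all of $\fqn$ (so that $\alpha W_m$ stays an $\fq$-subspace and so that $y/z=\alpha^{-1}\in\Fqt$, making~\eqref{eq:condR} applicable), and one must have $\Fqt\setminus\fq\neq\emptyset$, which holds precisely because $t$ is a \emph{nontrivial} divisor of $n$ with $t>1$. Everything else is routine dimension counting, so I would not belabor it.
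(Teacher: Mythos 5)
Your proof is correct. Part $(ii)$ is essentially the paper's own argument: the $\Lt$partially scattered condition confines $\ker(f-mx)$ to a single coset $y_0\Fqt$, which has $\fq$-dimension $t$. For part $(i)$ the paper takes a slightly different route in form: it observes that the intersection $\la v\ra_{\fqn}\cap\mathcal{G}_f$ (your $W_m$, up to the obvious identification) is scattered with respect to the Desarguesian $\Fqt$-spread of an $n$-dimensional $\fq$-space, i.e.\ $\dim_{\fq}(W_m\cap y\Fqt)\le 1$ for every $y$, and then quotes the Blokhuis--Lavrauw bound (Theorem~\ref{th:boundscatt}) to get $\dim_{\fq}W_m\le n/2$. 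Your pigeonhole argument --- if $\dim_{\fq}W_m>n/2$ then $W_m\cap\alpha^{-1}W_m\neq\{0\}$ for $\alpha\in\Fqt\setminus\fq$, yielding two $\fq$-independent elements $w,\alpha w$ of $W_m$ in the same spread element $w\Fqt$ --- is exactly the standard proof of that bound, specialized and inlined, so the two approaches are mathematically equivalent. Yours has the advantage of being self-contained; the paper's phrasing has the advantage of making the scattered-subspace structure explicit. Your explicit handling of the point $\la(0,1)\ra_{\fqn}$ and of the requirement $t>1$ (so that $\Fqt\setminus\fq\neq\emptyset$) is careful and correct.
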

\begin{proof}
    $(i)$ Let $P=\la v\ra_{\Fqn}$ be a point in $\PG(1,q^n)$.
    Then $P$ can be seen as an $n$-dimensional vector space over $\Fq$.
    The collection $\cD_P=\{\la w\ra_{\Fqt}\colon w\in\la v\ra_{\Fqn},\,w\neq0\}$
    is a Desarguesian spread of $P$.
    Let $U_{P,f}=\la v\ra_{\Fqn}\cap U_f$ whose $\Fq$-dimension is the weight of $P$.
    If $(y,f(y)),(z,f(z))\in\la w \ra_{\Fqt}\in\cD_P$, $yz\neq0$,
    then $f(y)/y=f(z)/z$ and $y/z\in\Fqt$ that by assumption implies
    $\la y\ra_{\Fq}=\la z\ra_{\Fq}$, and so 
    $(y,f(y))$ and $(z,f(z))$ belong to the same $\Fq$-subspace  of $P$.
    As a consequence, $U_{P,f}$ is an $\Fq$-subspace that is scattered with respect to
    the Desarguesian spread $\cD_P$, giving rise to a scattered $\Fq$-linear set of $\PG(\frac nt-1,q^t)$.
    By Theorem \ref{th:boundscatt}, $\dim_{\Fq}(U_{P,f})\le n/2$.
    $(ii)$ Let $y\in\F_{q^n}^*$. By definition, $U_f\cap\la(y,f(y))\ra_{\Fqn}\subseteq\la(y,f(y)\ra_{\Fqt}$,
    hence $w_{L_f}(\la(y,f(y))\ra_{\Fqn})\le t$.
\end{proof}

\begin{comment}
Clearly, if $f$ and $g$ are equivalent, then $\mathbb S_f\cong\mathbb S_g$.
\textcolor{red}{In particular, two $q$-polynomials $f(x)$ and $g(x)$ such that $g(x)=af(bx)+cx$ for some
$a,b,c\in \mathbb{F}_{q^n}$, $a,b\neq0$, are projectively equivalent.}\comment[id=FERD]{se non ho capito male, non serve questa definizione ristretta...}
\end{comment}

\subsection{Non-low weight $\Lt$partially scattered polynomials}

The next two results are stated in order to characterize the $\Lt$partially
scattered polynomials $f $ whose related algebras $\mathbb S_f$ are not fields.

\begin{prop}
  Let $t$ be a positive integer and let $\ell \in\cL_{t,q}$.
  Furthermore let $\tau\in\F_{q^{2t}}$ such that $\tau^{q^t}+\tau=0$ with $\tau\neq0$.
  Then the polynomial $f =\ell ^{q^t}-\ell $ is $\Lt$partially scattered if and only
  if the following polynomial in $\cL_{t,q}$ is nonsingular:
  \begin{equation}\label{e:pinv}
  \begin{cases} \ell &\text{ for }q\text{ even,}\\ \tau\ell(\tau x)&\text{ for }q\text{ odd.}
  \end{cases}
  \end{equation}
\end{prop}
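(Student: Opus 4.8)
The plan is to pass to the quadratic extension $\Fqn=\F_{q^{2t}}$ of $\Fqt$ and reduce the $\Lt$partially scattered property of $f$ to the (non)singularity of the polynomial in~\eqref{e:pinv}. The starting observation is that, since $\ell$ has coefficients in $\Fqt$, the automorphism $\theta\colon x\mapsto x^{q^t}$ of $\F_{q^{2t}}$ commutes with $\ell$ (both send $\sum c_ix^{q^i}$ to $\sum c_ix^{q^{i+t}}$), so that
\[
f(y)=\theta(\ell(y))-\ell(y)=\ell(\theta(y))-\ell(y)=\ell(y^{q^t}-y),\qquad y\in\F_{q^{2t}}.
\]
In particular $f$ vanishes on $\Fqt$, and $0\in D_f$.

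Next I bring $f$ to a normal form by choosing suitable $\Fqt$-coordinates on $\F_{q^{2t}}$, this being the only place where the characteristic matters. For $q$ odd, $\F_{q^{2t}}=\Fqt\oplus\tau\Fqt$; one checks that $\tau^{q^i-1}\in\Fqt$ for every $i$, so that both $\ell$ and $\theta$ preserve $\tau\Fqt$, with $\theta$ acting on it as $-\mathrm{id}$. Writing $y=a+\tau b$ with $a,b\in\Fqt$ one gets $\ell(\tau b)=\tau\,\tilde\ell(b)$, where $\tilde\ell=\sum c_i\tau^{q^i-1}x^{q^i}\in\cL_{t,q}$, and hence $f(a+\tau b)=-2\ell(\tau b)=\tau\,\psi(b)$ with $\psi:=-2\tilde\ell$; since $\tau\ell(\tau x)=\tau^{2}\tilde\ell(x)$ and $\tau^{2}\in\Fqt^{\ast}$, the polynomial $\psi$ is nonsingular exactly when $\tau\ell(\tau x)$ is. For $q$ even, pick $\omega\in\F_{q^{2t}}$ with $\Tr_{q^{2t}/q^t}(\omega)=\omega+\omega^{q^t}=1$, so $\{1,\omega\}$ is an $\Fqt$-basis of $\F_{q^{2t}}$; writing $y=a+\omega b$ one finds $y^{q^t}-y=b$, whence $f(a+\omega b)=\ell(b)=\psi(b)$ with $\psi:=\ell$. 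Thus in both cases there is a fixed element $\xi\in\{\tau,\omega\}\setminus\Fqt$ and a polynomial $\psi\in\cL_{t,q}$ (equal, up to a nonzero $\Fqt$-scalar, to the polynomial of~\eqref{e:pinv}) such that for $y=a+\xi b$ one has $f(y)=\xi\,\psi(b)$ or $f(y)=\psi(b)$, with $\psi(b)\in\Fqt$; in either reading, $f(y)=0$ if and only if $\psi(b)=0$, and the relation $f(y)/y=f(z)/z$ becomes, after cancelling the common factor, $\psi(b)z=\psi(b')y$ where $z=a'+\xi b'$.

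Now the two implications. Suppose the polynomial in~\eqref{e:pinv} is nonsingular, i.e.\ $\psi$ is a bijection of $\Fqt$. Let $y,z\in\F_{q^{2t}}^{\ast}$ with $f(y)/y=f(z)/z$. If this slope is $0$, then $\psi(b)=\psi(b')=0$, so $b=b'=0$ and $y,z\in\Fqt$, giving $y/z\in\Fqt$. If the slope is nonzero, then $\psi(b),\psi(b')\neq0$, and comparing the $\Fqt$-component and the $\xi$-component of $\psi(b)z=\psi(b')y$ (here one uses $\psi(b),\psi(b')\in\Fqt$ and $a,a'\in\Fqt$) yields $\psi(b)a'=\psi(b')a$ and $\psi(b)b'=\psi(b')b$, hence $y=\bigl(\psi(b)/\psi(b')\bigr)z$ with $\psi(b)/\psi(b')\in\Fqt$. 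In all cases $y/z\in\Fqt$, so $f$ is $\Lt$partially scattered. Conversely, if the polynomial in~\eqref{e:pinv} is singular, pick $b_0\in\Fqt^{\ast}$ with $\psi(b_0)=0$, and set $y:=\xi b_0$, $z:=1$. Then $f(y)=0=f(z)$, so $f(y)/y=0=f(z)/z$, while $y/z=\xi b_0\notin\Fqt$ since $\xi\notin\Fqt$ and $b_0\in\Fqt^{\ast}$. Hence~\eqref{eq:condL} fails and $f$ is not $\Lt$partially scattered.

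The routine-but-delicate part, and the main obstacle, is the characteristic-dependent coordinate bookkeeping of the middle paragraph: checking $\tau^{q^i-1}\in\Fqt$ and the identity $\tau\ell(\tau x)=\tau^{2}\tilde\ell(x)$ in the odd case, and fixing a trace-one complement $\omega$ and computing $y^{q^t}-y=b$ in the even case. Once the normal form $f(a+\xi b)=(\text{scalar})\cdot\psi(b)$ is established, both directions are short and proceed uniformly.
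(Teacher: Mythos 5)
Your proof is correct and follows essentially the same route as the paper's: both exploit the decomposition $\F_{q^{2t}}=\Fqt\oplus\xi\Fqt$ (with $\xi=\tau$ for $q$ odd and a complement of $\Fqt$ for $q$ even) to write $f(a+\xi b)$ as a fixed nonzero multiple of a polynomial $\psi\in\cL_{t,q}$ evaluated at $b$, observe that $\Fqt\subseteq\ker f$ and that the image of $f$ lies in a one-dimensional $\Fqt$-subspace, and conclude that the $\Lt$partially scattered condition is equivalent to $\ker f=\Fqt$, i.e.\ to the nonsingularity of \eqref{e:pinv}. The only differences are cosmetic (your identity $f(y)=\ell(y^{q^t}-y)$ and the explicit component-by-component verification replace the paper's direct expansion and its one-line reduction to $\dim_{\fq}\ker f\le t$).
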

\begin{proof}
We will repeteadly use the fact that if $\Tr_{q^{2t}/q^t}(a)=\Tr_{q^{2t}/q^t}(b)=0$, then $(i)$~$ab\in\Fqt$, $(ii)$~$\Tr_{q^{2t}/q^t}(a^q)=0$.

For any $x\in\mathbb F_{q^{2t}}$,
\[
(\tau^{-1}f )^{q^t}=
\tau^{-q^t}(\ell -\ell ^{q^t})=(-\tau^{-1})(-f )=\tau^{-1}f .
\]
As a consequence, the image of $f $ is contained in $\tau\Fqt$.

Assume $\frac{f(y)}y=\frac{f(z)}z$, $y,z\in\F_{q^{2t}}^*$;
if $f(z)\neq0$ then $\frac yz=\frac{f(y)}{f(z)}\in\Fqt$.
Therefore, since $\Fqt\subseteq\ker (f )$, $f $ is $\Lt$partially scattered if and only if
$\dim_{\fq}(\ker (f ))\le t$.
The polynomial $\ell $ is of the form
\[
  \ell =\sum_{i=0}^{t-1}c_ix^{q^i},\quad c_i\in\Fqt,\ i=0,1,\ldots,t-1.
\]

Next suppose that \underline{$q$ is odd}, then for any $x\in\F_{q^{2t}}$ there are unique
$x_1,x_2\in\Fqt$ such that $x=x_1+x_2\tau$.
Since
\begin{equation}\label{e:tauFq1}
f(x)=f(x_1)+f(x_2\tau)=f(x_2\tau)=\sum_{i=0}^{t-1}c_i(x_2\tau)^{q^{i+t}}
-\sum_{i=0}^{t-1}c_i(x_2\tau)^{q^{i}}=-2\ell(x_2\tau),
\end{equation}
the $q$-polynomial $f $ is $\Lt$partially scattered if and only if the 
restriction of $\ell(x\tau)$ to $\Fqt$ (i.e. seen as an $\fq$-linear map from $\Fqt$ to $\fqn$) has rank $t$.

Finally let consider the case in which \underline{$q$ is even}. If $\theta\in\F_{q^{2t}}$ satisfies $\theta^{q^t}+\theta\neq0$,
then $\F_{q^{2t}}=\Fqt\oplus\theta{\Fqt}$, hence 
for any $x\in\F_{q^{2t}}$ there are unique
$x_1,x_2\in\Fqt$ such that $x=x_1+x_2\theta$.
Since
\begin{equation}\label{e:tauFq2}
f(x)=f(x_2\theta)=\sum_{i=0}^{t-1}c_i\theta^{q^{t+i}}x_2^{q^i}+
\sum_{i=0}^{t-1}c_i\theta^{q^{i}}x_2^{q^i}=\ell(x_2(\theta^{q^t}+\theta))
\end{equation}
and $\theta^{q^t}+\theta\in\Fqt$,
the $q$-polynomial $f $ is $\Lt$partially scattered if and only if the rank of 
$\ell $ as a polynomial in $\cL_{t,q}$ is $t$.
\end{proof}


\begin{sloppy}
\begin{thm}
    Let $t$ be a proper divisor of $n$.
    Let $f \in\mathbb{F}_{q^{n}}[x]$ be an $\Lt$partially scattered polynomial
   in $\cL_{n,q}$. 
   Then $\mathbb{S}_f$ is not a field if and only if
   $f $ is equivalent to $\ell ^{q^{t}}-\ell $
   for some $\ell \in\cL_{t,q}$, and $n=2t$.
\end{thm}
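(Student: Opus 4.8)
The plan is to establish both implications around the description of $\Lt$partially scattered polynomials whose stabilizer fails to be a field. For the "if" direction, suppose $f$ is equivalent to $\ell^{q^t}-\ell$ for some $\ell\in\cL_{t,q}$ with $n=2t$. By the preceding proposition we may assume (after replacing $\ell$ by $\tau\,\ell(\tau x)$ when $q$ is odd) that the relevant polynomial in $\cL_{t,q}$ is nonsingular, so $f$ is genuinely $\Lt$partially scattered; we then compute $\mathbb{S}_f$ directly. As in Example~\ref{ex:traceSf}, a matrix $\begin{pmatrix}a&b\\c&d\end{pmatrix}$ lies in $\mathbb{S}_f$ iff for every $x$ there is $y$ with $\begin{pmatrix}a&b\\c&d\end{pmatrix}\begin{pmatrix}x\\f(x)\end{pmatrix}=\begin{pmatrix}y\\f(y)\end{pmatrix}$, which gives a polynomial identity $c\,x+d\,f(x)=f(ax+b\,f(x))$ modulo $x^{q^{2t}}-x$. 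Using that $\im f\subseteq\tau\Fqt$ (equivalently $\Tr_{q^{2t}/q^t}$ of the image is $0$) and that $\Fqt\subseteq\ker f$, one checks that the map $x\mapsto a\,x$ restricted to $\ker f$ must again land in $\ker f$, and that $b\,f(x)$ contributes a term killed by the next application of $f$; carrying out the comparison of coefficients shows $\mathbb{S}_f$ contains a matrix of the shape $\begin{pmatrix}0&b\\0&*\end{pmatrix}$ with $b\neq0$ (the "$b$-freedom" already visible in the $t'=2$ trace example and in the last example of Section~\ref{s:Sf}), hence a nonzero singular matrix, so $\mathbb{S}_f$ is not a field.

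For the "only if" direction, assume $f$ is $\Lt$partially scattered and $\mathbb{S}_f$ is not a field. By Theorem~\ref{thm:Gfofield} (contrapositive) $f$ is not a low weight polynomial, so there is a point $P\in\PG(1,q^n)$ with $w_{L_f}(P)\ge n/2$; by Proposition~\ref{p:weightps}$(ii)$ this weight is at most $t$, forcing $n/2\le t$, and since $t$ is a proper divisor of $n$ we get $n=2t$ and $w_{L_f}(P)=t$. Thus some point of $L_f$ has full weight $t$: there is a $1$-dimensional $\Fqn$-subspace meeting $U_f$ in a $t$-dimensional $\Fq$-subspace, which (being contained in a single element of the Desarguesian $\Fqt$-spread of that point, by the $\Lt$-condition) is an $\Fqt$-subspace. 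After applying an element of $\GaL(2,q^n)$ we may move $P$ to $\la(0,1)\ra_{\Fqn}$; this means $\ker f$ is an $\Fqt$-subspace of $\Fqn=\F_{q^{2t}}$ of dimension $t$, i.e.\ $\ker f=\Fqt$ up to the $\Fqt$-linear automorphism, so $\Fqt\subseteq\ker f$.

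The heart of the argument is then to deduce from $\Fqt\subseteq\ker f$ and $f\in\cL_{2t,q}$ that $f$ is equivalent to $\ell^{q^t}-\ell$. Since $\Fqt=\ker(x^{q^t}-x)$ and $\ker f\supseteq\ker(x^{q^t}-x)$, the polynomial $f$ factors (in $\cL_{2t,q}$ under composition) as $f=g\circ(x^{q^t}-x)$ for some $g\in\cL_{2t,q}$; writing $g=\sum_{i=0}^{2t-1}b_i x^{q^i}$ and reducing $g\circ(x^{q^t}-x)$ modulo $x^{q^{2t}}-x$ collapses $g$ to an $\Fqt$-linearized part, so $f=h^{q^t}-h$ for a suitable $h$ obtained from the "top half" minus "bottom half" of the coefficients, and a final adjustment (composing on the outside with an $\Fqn$-scalar, which is an equivalence) brings $h$ into $\cL_{t,q}$, yielding $f\sim\ell^{q^t}-\ell$. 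I expect the main obstacle to be exactly this last structural step: one must be careful that the factorization through $x^{q^t}-x$ is legitimate \emph{within the quotient} $\cL_{2t,q}$ and that the reduction modulo $x^{q^{2t}}-x$ really forces the $\ell^{q^t}-\ell$ shape rather than merely $h^{q^t}-h$ with $h$ of $q$-degree up to $2t-1$; identifying which outer and inner $\GaL(2,q^n)$-moves are needed to normalise $h$ down to $\cL_{t,q}$, and verifying that these do not disturb the already-achieved normalisation $\ker f=\Fqt$, is the delicate part.
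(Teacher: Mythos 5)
Your ``if'' direction and the opening reduction to $n=2t$ (via Proposition~\ref{p:weightps} and Theorem~\ref{thm:Gfofield}) are essentially the paper's. The genuine gap is in the ``only if'' direction: from the hypothesis that $\mathbb{S}_f$ is not a field you extract only the weaker consequence that $f$ is not of low weight, normalise so that $\Fqt\subseteq\ker f$, and then try to read off the shape $\ell^{q^t}-\ell$ from the kernel condition alone. But $\Fqt\subseteq\ker f$ only yields $f=\sum_{j=0}^{t-1}c_j\bigl(x^{q^{j+t}}-x^{q^j}\bigr)$ with the $c_j\in\Fqn$ \emph{arbitrary}, whereas $\ell^{q^t}-\ell$ with $\ell=\sum_j c_jx^{q^j}\in\cL_{t,q}$ forces every $c_j\in\Fqt$ --- equivalently, $\im f$ must lie in the single line $\tau\Fqt$, where $\tau^{q^t}+\tau=0$. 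Your proposed ``final adjustment by an outer $\Fqn$-scalar'' multiplies all the $c_j$ by one and the same element, so it can push them into $\Fqt$ only if they already lie on a common $\Fqt$-line; nothing in your argument guarantees this, and the kernel normalisation does not imply it. This image constraint is exactly what the paper extracts from the singular matrix itself: writing the rank-one $C\in\mathbb{S}_f$ as $C^1=\alpha C^2$ and setting $g=\alpha x+f$, every vector $g(x)C^2$ lies in $\mathcal{G}_f$, so for $g(y),g(z)\neq0$ the $\Lt$partially scattered condition applied to the two points $g(y)C^2,g(z)C^2$ of $\mathcal G_f$ gives $g(y)/g(z)\in\Fqt$; hence $\im g$ sits inside one $\Fqt$-line, which after an outer factor $\tau\beta$ becomes $h(x)^{q^t}+h(x)=0$ and, combined with $c_i+c_{i+t}=0$ coming from the kernel, finally forces $c_i\in\Fqt$. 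Without some such use of the singular matrix beyond ``not low weight,'' your argument would prove that \emph{every} $\Lt$partially scattered polynomial with a point of weight $n/2$ is equivalent to $\ell^{q^t}-\ell$ (and so has non-field stabilizer), which is not what the theorem asserts and is not established anywhere in your proposal.

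A smaller slip: a point of positive weight in $L_f$ can never be moved to $\la(0,1)\ra_{\Fqn}$ while keeping the subspace a graph, since $\mathcal{G}_g\cap\la(0,1)\ra_{\Fqn}=\{(0,0)\}$ for every function $g$; you mean $\la(1,0)\ra_{\Fqn}$, whose weight equals $\dim_{\Fq}\ker g$, and you must also check that the chosen collineation keeps $\la(0,1)\ra_{\Fqn}$ of weight zero so that the image is again a graph.
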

\end{sloppy}

\begin{proof}
By Proposition~\ref{p:weightps} and Theorem~\ref{thm:Gfofield},
only the case $n=2t$ has to be taken into account.
Let $\tau\in\F_{q^{2t}}$ such that $\tau^{q^t}+\tau=0$, $\tau\neq0$.
The sufficiency follows by checking that the non-invertible matrix 
$\left(  \begin{array}{cc} 0 & \tau \\ 0 & 0 \end{array}\right)$ stabilizes 
$\mathcal{G}_{\ell^{q^{t}}-\ell}$. 
Indeed, as has been shown in \eqref{e:tauFq1}, \eqref{e:tauFq2}
in the respective cases of $q$ odd or even, for any $x\in\mathbb F_{q^{2t}}$,
$f(x) =\ell(x)^{q^t}-\ell(x) $ belongs to $\tau\Fqt$, hence
$f(\tau f(x))=0$.

Next assume that $\mathbb{S}_f$ is not a field.
Then $C\in\mathbb{F}_{q^{n}}^{2\times2}$ of rank one exists such that $C\mathcal G_f\subseteq \mathcal G_f$. 
Let $C^{1}$ and $C^{2}$ be the columns of $C$, then  for any $y\in\mathbb{F}_{q^{n}}$
there is $z\in\mathbb{F}_{q^{n}}$ such that 
$yC^{1}+f(y)C^{2}=\left(  \begin{array}{c} z \\ f(z) \end{array}\right)$. 
The condition $C^{2}=0$ would imply $\langle C^{1}\rangle_{\F_{q^{n}}}\subseteq \mathcal G_f$, contradicting
the assumption that $f $ is $\Lt$partially scattered.
So, since $C$ has rank one, then there exists $\alpha\in\Fqn$ such that $C^{1}=\alpha C^{2}$, and for all 
$x\in\mathbb{F}_{q^{n}}$ we have that 
\begin{equation}\label{e:gx}
(\alpha x+f(x))C^{2}\subseteq \mathcal{G}_f.
\end{equation}
Let $g =\alpha x+f $. 
There is at least one element $x_0\in\Fqn$ such that $g(x_{0})\neq 0$, otherwise $\frac{f(x)}{x}=-\alpha$
for any $x\in\Fqn$, once again contradicting
the assumption that $f $ is $\Lt$partially scattered.
Let $\beta=g(x_{0})^{-1}$. 
Note that $\beta g(x)\in\mathbb{F}_{q^{t}}$ for any $x\in\Fqn$ since 
$g(x)C^{2},g(x_{0})C^{2}\in \mathcal G_f$ and $f$ is $\Lt$partially scattered. 
It holds $0\neq x\in \ker g$ if and only if $\frac{f(x)}{x}=-\alpha$.
Assume that a $\omega\in\ker g$, $\omega\neq0$ exists.
Then for any $z\in\ker g$, $z\neq0$, $f(\omega)/\omega=f(z)/z$,
implying, by the assumption that $f $ is $\Lt$partially scattered,
$\ker g\subseteq \omega\Fqt$.
In particular, $\dim_{\fq}\ker g\leq t$, and the equality holds if and only if $\ker g= \omega\Fqt$.
Now let \[C^2=\begin{pmatrix}c_{12}\\ c_{22}\end{pmatrix}.
\]
By \eqref{e:gx}, $c_{12}\neq0$.
Assume that $g(y)g(z)\neq0$ for $y,z\in\fqn$.
Since $g(y)C^2$ and $g(z)C^2$ belong to $\mathcal G_f$, defining
$Y=c_{12}g(y)$ and $Z=c_{12}g(z)$ one obtains
\[
\frac{f(Y)}Y=\frac{f(Z)}Z=\frac{c_{22}}{c_{12}}
\]
and this implies $g(y)/g(z)\in\Fqt$.
Therefore, $\dim_\fq\im g\le t$.
Combining with $\dim_\fq\ker g\le t$, one obtains
$\dim_\fq \im g=\dim_\fq\ker g=t$.
This in turn implies $\ker g=\omega\mathbb{F}_{q^{t}}$ for some
$\omega\in\mathbb F_{q^n}^*$.
Define $h=\tau\beta g(\omega x)$ for $x\in\fqn$.
We have
\[
\begin{pmatrix}\omega^{-1}&0\\ \alpha\tau\beta&\tau\beta\end{pmatrix}
\begin{pmatrix}x\\ f(x)\end{pmatrix}=
\begin{pmatrix}\omega^{-1}x\\ \tau\beta g(x)\end{pmatrix}=
\begin{pmatrix}
y\\ \tau\beta g(\omega y)    
\end{pmatrix}
\]
for $y=\omega^{-1}x$; so, $f $ and $h $ are equivalent polynomials.
Let $h =\sum_{i=0}^{2t-1}c_{i}x^{q^{i}}$.
Since $\ker h=\mathbb{F}_{q^{t}}$, for all  $x\in\mathbb{F}_{q^{t}}$ it holds
\[ \sum_{i=0}^{t-1}(c_{i}+c_{i+t})x^{q^{i}}=0, \] 
that implies $c_{i}+c_{i+t}=0$, $i=0,\ldots,t-1$. 
Therefore 
\[  h =\sum_{i=0}^{t-1}c_i(x^{q^i}-x^{q^{i+t}}). \]
Taking into account that $\beta g(\omega x)\in\Fqt$ for any $x\in\fqn$,
it follows $h(x)^{q^t}+h(x)=0$, and
\[
-\sum_{i=0}^{t-1}c_i^{q^t}(x^{q^i}-x^{q^{i+t}})+\sum_{i=0}^{t-1}c_i(x^{q^i}-x^{q^{i+t}})=0
\]
hence for any $i=0,1,\ldots,t-1$, $c_i$ is in $\Fqt$, leading to $h =\ell ^{q^t}-\ell $ for $\ell =-\sum_{i=0}^{t-1}c_{i}x^{q^{i}}$. 
\end{proof}

We did not manage to find a similar characterization for the
$\Rt$partially scattered polynomials such that $\mathbb S_f$ is not a field.
We are going to give an example to show that this situation can occur (Corollary~\ref{c:exRt}).

\subsection{The case of complementary weights}\label{s:cw}

Following \cite{Complwei}, we define a polynomial to have \textit{complementary weights} if $L_f$ has two points of complementary weights, i.e.\ there exist two point $P,Q \in \mathrm{PG}(1,q^n)$ with $P\ne Q$ such that 
\[ w_{L_f}(P)+w_{L_f}(Q)=n. \]

The linear sets with complementary weights can be described by projecting maps. Recall that if $S$ and $T$ are $\fq$-subspaces of $\fqn$ such that $\fqn=S \oplus T$ denote by $p_{T,S}$ the $\fq$-linear map of $\fqn$ projecting an element of $\fqn$ from $T$ onto $S$.

\begin{thm}\label{clas}\cite[Theorem 3.4]{Complwei}
Let $L_U$ be an $\fq$-linear set of rank $n$ in $\PG(1,q^n)$ admitting a point $P$ of weight $t$ and a point $Q\ne P$ of weight $s$ with $t+s=n$.
Then $L_U$ is  $\mathrm{PGL}(2,q^n)$-equivalent to the $\fq$-linear set
\[L_{p_{T,S}}=\{\langle (x,p_{T,S}(x)) \rangle_{\fqn} \colon x \in \fqn^*\},\]
where $T$ and $S$ are $\fq$-subspaces of $\fqn$ of dimension $t$ and $s$, respectively, such that $\fqn=T\oplus S$ and $p_{T,S} $ is the projection map of $\fqn$ from $T$ onto $S$.
Moreover, if $T=\langle \xi_0,\ldots,\xi_{t-1}\rangle_{\fq}$, $S=\langle \xi_t,\ldots,\xi_{n-1} \rangle_{\fq}$ and $\mathcal{B}^*=(\xi_0^*,\ldots,\xi_{n-1}^*)$ is the dual basis of $\mathcal{B}=(\xi_0,\ldots,\xi_{n-1})$, then
\[ p_{T,S} =\sum_{i=t}^{n-1}\xi_i\mathrm{Tr}_{q^n/q}(\xi_i^*x)=\sum_{j=0}^{n-1}\left( \sum_{i=t}^{n-1} \xi_i\cdot \xi_i^{*q^j} \right)x^{q^j}. \]
\end{thm}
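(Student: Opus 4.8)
The plan is to use the (sharp) $3$-transitivity, in particular the $2$-transitivity, of $\mathrm{PGL}(2,q^n)$ on $\PG(1,q^n)$ to move the two complementary-weight points to a standard position, then to observe that in this position the underlying $\fq$-subspace is forced to be a ``box'' $T_0\times S_0$, and finally to correct by a diagonal collineation so that the two factors become complementary in $\fqn$; the reduction to the displayed projection map and the explicit linearized form are then bookkeeping. Throughout I write $U\subseteq\fqn^2$ for the $n$-dimensional $\fq$-subspace with $L_U$ the given linear set, and I may assume $1\le t,s\le n-1$ (the degenerate cases $t=0$ or $s=0$ give a single point and are immediate).

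First I would invoke transitivity of $\mathrm{PGL}(2,q^n)$ on ordered pairs of distinct points to assume, up to equivalence, that $P=\langle(1,0)\rangle_{\fqn}$ and $Q=\langle(0,1)\rangle_{\fqn}$. Put $T_0=\{x\in\fqn:(x,0)\in U\}$ and $S_0=\{y\in\fqn:(0,y)\in U\}$, so $\dim_{\fq}T_0=w_{L_U}(P)=t$ and $\dim_{\fq}S_0=w_{L_U}(Q)=s$ with $t+s=n$. Applying rank--nullity to the coordinate projection $\pi_1\colon U\to\fqn$, whose kernel is $\{0\}\times S_0$, gives $\dim_{\fq}\pi_1(U)=n-s=t$; since $T_0\subseteq\pi_1(U)$ and $\dim_{\fq}T_0=t$, we get $\pi_1(U)=T_0$, and symmetrically $\pi_2(U)=S_0$. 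Now if $(x,y)\in U$ then $x\in T_0$, hence $(x,0)\in U$, hence $(0,y)=(x,y)-(x,0)\in U$, so $y\in S_0$; thus $U\subseteq T_0\times S_0$, and since $\dim_{\fq}(T_0\times S_0)=t+s=n=\dim_{\fq}U$, we conclude $U=T_0\times S_0$.

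Next I would make the two factors complementary by a rescaling. The collineation $(x,y)\mapsto(x,\gamma y)$ fixes $P$ and $Q$ and sends $U=T_0\times S_0$ to $T_0\times\gamma S_0$, so it suffices to find $\gamma\in\fqn^*$ with $T_0\cap\gamma S_0=\{0\}$; together with $\dim_{\fq}T_0+\dim_{\fq}\gamma S_0=n$ this forces $\fqn=T_0\oplus\gamma S_0$. Such a $\gamma$ exists by a counting estimate: the ``forbidden'' values are exactly the ratios $zw^{-1}$ with $z\in T_0\setminus\{0\}$ and $w\in S_0\setminus\{0\}$, at most $(q^t-1)(q^s-1)=q^n-q^t-q^s+1$ of them, which is strictly less than $q^n-1=|\fqn^*|$ because $q^t+q^s-2\ge 2q-2>0$. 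Setting $T=T_0$ and $S=\gamma S_0$, we have shown $L_U$ is $\mathrm{PGL}(2,q^n)$-equivalent to $L_{T\times S}$ with $\fqn=T\oplus S$.

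Finally I would identify $L_{T\times S}$ with $L_{p_{T,S}}$: the element $(x,y)\mapsto(x-y,y)$ of $\mathrm{PGL}(2,q^n)$ sends $U_{p_{T,S}}=\{(x,p_{T,S}(x)):x\in\fqn\}$ to $\{(x-p_{T,S}(x),p_{T,S}(x)):x\in\fqn\}$, and writing $x=t'+s'$ with $t'\in T$, $s'\in S$ (so $p_{T,S}(x)=s'$, $x-p_{T,S}(x)=t'$) this image is exactly $T\times S$; hence $L_{p_{T,S}}$ is equivalent to $L_{T\times S}$, and by transitivity $L_U$ is equivalent to $L_{p_{T,S}}$. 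The ``Moreover'' part is then the dual-basis computation: if $x=\sum_{i=0}^{n-1}c_i\xi_i$ with $c_i\in\fq$, then $\Tr_{q^n/q}(\xi_i^*\xi_j)=\delta_{ij}$ gives $c_i=\Tr_{q^n/q}(\xi_i^*x)$, whence $p_{T,S}(x)=\sum_{i=t}^{n-1}c_i\xi_i=\sum_{i=t}^{n-1}\xi_i\Tr_{q^n/q}(\xi_i^*x)$, and expanding $\Tr_{q^n/q}(\xi_i^*x)=\sum_{j=0}^{n-1}\xi_i^{*q^j}x^{q^j}$ and interchanging the sums yields $\sum_{j=0}^{n-1}\bigl(\sum_{i=t}^{n-1}\xi_i\xi_i^{*q^j}\bigr)x^{q^j}$, as claimed. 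The only step with genuine content is the ``box'' reduction together with the rescaling that achieves $\fqn=T\oplus S$ (one cannot in general take $T_0,S_0$ themselves to be complementary); the rest is routine manipulation of collineations and dimensions.
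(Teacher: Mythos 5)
This theorem is quoted from \cite[Theorem 3.4]{Complwei}; the paper under review gives no proof of its own, so there is nothing internal to compare against. Judged on its own terms, your argument is correct and complete. The chain of reductions is sound: $2$-transitivity of $\mathrm{PGL}(2,q^n)$ puts $P,Q$ at $\langle(1,0)\rangle_{\fqn}$, $\langle(0,1)\rangle_{\fqn}$; the ``box'' step $U=T_0\times S_0$ follows (your rank--nullity argument works, though you could shortcut it by noting that $(T_0\times\{0\})\oplus(\{0\}\times S_0)\subseteq U$ already has dimension $t+s=n$); the counting $(q^t-1)(q^s-1)<q^n-1$ correctly produces a $\gamma$ with $\fqn=T_0\oplus\gamma S_0$, and you rightly flag that this rescaling is the one genuinely necessary idea, since $T_0$ and $S_0$ need not be complementary as given; the shear $(x,y)\mapsto(x-y,y)$ identifies $U_{p_{T,S}}$ with $T\times S$; and the dual-basis computation for the ``Moreover'' part is routine and correct. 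One cosmetic remark: under the theorem's hypotheses $P$ and $Q$ are points of $L_U$, so $t,s\ge1$ and the degenerate cases you set aside do not actually occur.
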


We will now show that when $f =p_{T,S} $, for some $\fq$-subspaces $S$ and $T$ of $\fqn$, $\mathbb{S}_f$ is not a field. Note that, by definition, projection maps are not low weight polynomials.

\begin{prop}\label{prop:nofields}
Let $S$ and $T$ be two $\fq$-subspaces of $\fqn$ such that $\fqn=S \oplus T$ and both of dimension greater than zero. Then $\mathbb{S}_{p_{T,S}}$ is not a field.
\end{prop}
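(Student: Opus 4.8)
The plan is to write down explicitly a non-invertible nonzero matrix that stabilizes $\mathcal{G}_{p_{T,S}}$, which immediately shows $\mathbb{S}_{p_{T,S}}$ is not a field (it contains singular nonzero elements). Write $p = p_{T,S}$ for brevity. The key structural facts I would use are: $p$ is an $\fq$-linear idempotent, $p^2 = p$ (composition of linearized polynomials, i.e. $p(p(x)) = p(x)$ for all $x$), its image is $S$ and its kernel is $T$, and on $S$ it acts as the identity: $p(s) = s$ for all $s \in S$.

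First I would guess the matrix $M = \begin{pmatrix} 1 & 0 \\ 0 & 0 \end{pmatrix}$ and check whether $M\mathcal{G}_p \subseteq \mathcal{G}_p$. For $x \in \fqn$ we have $M\begin{pmatrix} x \\ p(x) \end{pmatrix} = \begin{pmatrix} x \\ 0 \end{pmatrix}$, and this lies in $\mathcal{G}_p$ iff $p(x) = 0$, which fails in general. So that naive choice does not work, and I would instead look for a rank-one matrix built from the decomposition $\fqn = S \oplus T$. The natural candidate is $M = \begin{pmatrix} 0 & 0 \\ 0 & 1 \end{pmatrix}$: then $M\begin{pmatrix} x \\ p(x)\end{pmatrix} = \begin{pmatrix} 0 \\ p(x)\end{pmatrix}$, and since $p(x) \in S$ we need $\begin{pmatrix} 0 \\ p(x)\end{pmatrix} = \begin{pmatrix} y \\ p(y)\end{pmatrix}$ for some $y$; forcing $y = 0$ gives $p(x) = p(0) = 0$, which again fails. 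The fix is to use a matrix whose first row recovers an element on which $p$ acts correctly. Consider $M = \begin{pmatrix} 0 & 1 \\ 0 & 1 \end{pmatrix}$. Then $M\begin{pmatrix} x \\ p(x)\end{pmatrix} = \begin{pmatrix} p(x) \\ p(x)\end{pmatrix}$, and setting $y = p(x)$ we have $p(y) = p(p(x)) = p(x) = y$ by idempotency, so indeed $\begin{pmatrix} p(x) \\ p(x)\end{pmatrix} = \begin{pmatrix} y \\ p(y)\end{pmatrix} \in \mathcal{G}_p$. Thus $M \in \mathbb{S}_p$, $M \neq O$, and $\det M = 0$, so $\mathbb{S}_p$ contains a nonzero singular matrix and cannot be a field.

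I should also confirm that $\mathbb{S}_p$ actually contains a nonzero element that is invertible or at least that $\mathbb{S}_p \neq \{O\}$ with more than one element, so that "not a field" is the honest conclusion rather than a degenerate triviality — but this is automatic since the identity matrix $I$ lies in $\mathbb{S}_p$ (as $I\mathcal{G}_p = \mathcal{G}_p$), and $I \neq M$. Hence $\mathbb{S}_p$ has at least the two distinct elements $I$ and $M$ with $M$ singular and nonzero, ruling out the field structure. The only place requiring care is verifying $p^2 = p$ directly from the explicit expression in Theorem~\ref{clas}, or alternatively invoking the projection-map description: $p_{T,S}$ restricted to $S$ is the identity and kills $T$, so $p_{T,S} \circ p_{T,S} = p_{T,S}$ since the image $S$ is fixed pointwise. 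I do not anticipate a genuine obstacle here; the main subtlety is just picking the right rank-one matrix, and the idempotency of the projection is exactly what makes $\begin{pmatrix} 0 & 1 \\ 0 & 1 \end{pmatrix}$ work. The hypothesis that both $S$ and $T$ have positive dimension is what guarantees $M$ has rank one rather than being $O$ or invertible: if $T = 0$ then $p = x$ and if $S = 0$ then $p = 0$, and in those boundary cases $\mathbb{S}_p$ is indeed a field, consistent with the statement.
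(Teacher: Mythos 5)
Your proof is correct and takes essentially the same approach as the paper: both arguments exhibit an explicit nonzero singular matrix in $\mathbb{S}_{p_{T,S}}$, the paper using $\left(\begin{smallmatrix}1&-1\\0&0\end{smallmatrix}\right)$ via the identities $p_{S,T}=x-p_{T,S}$ and $p_{T,S}\circ p_{S,T}=0$, which is exactly $I$ minus your matrix $\left(\begin{smallmatrix}0&1\\0&1\end{smallmatrix}\right)$, so the two verifications (complementary projection versus idempotency $p^2=p$) are equivalent. One side remark: your closing claim that $\mathbb{S}_p$ is a field in the boundary cases $S=0$ or $T=0$ is false (for $p=0$ the stabilizer is the full upper-triangular algebra, and for $p=x$ it is $\{A\colon a+b=c+d\}$, neither of which is a field), but this aside does not affect the validity of your proof of the stated proposition.
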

\begin{proof}
Let us start by observing that 
\begin{equation}\label{eq:cond1pTS} p_{S,T} =x-p_{T,S}  \end{equation}
and
\begin{equation}\label{eq:cond2pTS} p_{T,S}\circ p_{S,T} =0, \end{equation}
so that the matrix
\[ \begin{pmatrix}
    1 & -1 \\ 0 &0
\end{pmatrix} \in \mathbb{S}_{p_{T,S}}, \]
since 
\[  
\begin{pmatrix}
    1 & -1 \\ 0 &0
\end{pmatrix} \begin{pmatrix}
   x \\ p_{T,S}(x)
\end{pmatrix}=\begin{pmatrix}
   x - p_{T,S}(x) \\ 0
\end{pmatrix} \in \mathcal{G}_{p_{T,S}},
\]
by \eqref{eq:cond1pTS} and \eqref{eq:cond2pTS}.
So, $\begin{pmatrix}
    1 & -1 \\ 0 &0
\end{pmatrix}$ is a non-invertible matrix in $\mathbb{S}_{p_{T,S}}$.
\end{proof}

We will now show examples of R-$q^{n/2}$-partially scattered polynomials whose linear set has exactly two points of weight $n/2$.
To this aim we start with the following definition.

\begin{defn}
Let $U$ be an $\fq$-subspace of $\fqn\times \fqn$ and let $t$ be a divisor of $n$. Then we say that $U$ is \textit{R-$q^t$-partially scattered} if and only if for every $u \in \fqn\times \fqn$ we have
\[ \dim_{\fq}(U \cap \la u \ra_{\F_{q^t}})\leq 1. \]
\end{defn}

In particular, $f$ is R-$q^t$-partially scattered if and only if $U_f$ is R-$q^t$-partially scattered, and so our definition can be seen as a slight extension to the notion of R-$q^t$-partially scattered given for linearized polynomials in \cite{LZ2}.

We show that the following example of linear set with two points of complementary weights in \cite[Corollary 4.9]{Complwei} defines also an R-$q^t$-partially scattered subspace.


\begin{prop}\label{prop:2pointsrqtpartscatt}
Let $s$ be a positive integer coprime with $t$, $f =x^{q^s}$ and $g =\mu x^{q^s}$ with $\mu \in \F_{q^t}^*$ such that $\N_{q^t/q}(\mu) \neq 1$ and $\N_{q^{t}/q}(-\xi^{q^t+1}\mu)\neq (-1)^t$, 
with $\xi \in \F_{q^{2t}}\setminus\F_{q^t}$.
Let
\[ U=T \times S=\{ (v+\xi \mu v^{q^s},u+\xi u^{q^s}) \colon u,v \in \F_{q^t} \}. \]
Then $U$ is an R-$q^t$-partially scattered.
\end{prop}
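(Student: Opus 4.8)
The plan is to unwind the definition of an R-$q^t$-partially scattered subspace into a single Frobenius-fixed-point statement and then settle it by a short coefficient comparison in the quadratic extension $\F_{q^{2t}}/\Fqt$ (here $n=2t$). First I would reduce the requirement $\dim_{\fq}(U\cap\la u\ra_{\Fqt})\le1$ for every $u$ to the following assertion: for each nonzero $w\in U$ and each $\lambda\in\Fqt^*$, if $\lambda w\in U$ then $\lambda\in\fq$. Indeed, if some $\F_{q^t}$-line $\la u\ra_{\Fqt}$ met $U$ in $\fq$-dimension at least $2$, I could pick $\fq$-independent $w,w'\in U$ on that line; writing $w'=\lambda w$ with $\lambda\in\Fqt^*$ forces $\lambda\notin\fq$ (otherwise $w,w'$ would be $\fq$-dependent), contradicting the assertion. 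Conversely, under the assertion any nonzero $w\in U\cap\la u\ra_{\Fqt}$ spans the line over $\Fqt$, and every other element of the intersection is of the form $\lambda w$ with $\lambda\in\Fqt$, hence in $\la w\ra_{\fq}$; so the intersection is exactly $\la w\ra_{\fq}$, of dimension $1$.

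Next I would carry out the computation. Write $w=(v_1+\xi\mu v_1^{q^s},\,u_1+\xi u_1^{q^s})$ with $(u_1,v_1)\in\Fqt^2\setminus\{(0,0)\}$, and suppose $\lambda w=(v_2+\xi\mu v_2^{q^s},\,u_2+\xi u_2^{q^s})$ for some $\lambda\in\Fqt^*$ and $u_2,v_2\in\Fqt$. The structural observation that makes the argument work is that $\{1,\xi\}$ is an $\Fqt$-basis of $\F_{q^{2t}}$ (since $\xi\notin\Fqt$ and $[\F_{q^{2t}}:\Fqt]=2$), and that every scalar occurring—namely $\lambda,\mu,v_i,u_i$ and their $q^s$-th powers—lies in $\Fqt$, because $\Fqt$ is stable under $x\mapsto x^{q^s}$ and $\mu\in\Fqt$. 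Hence I may equate the $1$- and $\xi$-components in each coordinate. The first coordinate yields $v_2=\lambda v_1$ together with $\mu\lambda v_1^{q^s}=\mu v_2^{q^s}=\mu\lambda^{q^s}v_1^{q^s}$, i.e. $\mu v_1^{q^s}(\lambda-\lambda^{q^s})=0$; the second coordinate yields $u_2=\lambda u_1$ and $u_1^{q^s}(\lambda-\lambda^{q^s})=0$.

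To conclude I would invoke the coprimality hypothesis. Since $(u_1,v_1)\neq(0,0)$, $\mu\neq0$, and the Frobenius $x\mapsto x^{q^s}$ is injective, at least one of $v_1^{q^s},u_1^{q^s}$ is nonzero, so one of the two displayed relations forces $\lambda^{q^s}=\lambda$. An element $\lambda\in\Fqt$ fixed by $x\mapsto x^{q^s}$ lies in $\Fqt\cap\F_{q^s}=\F_{q^{\gcd(s,t)}}=\fq$, because $\gcd(s,t)=1$. This gives $\lambda\in\fq$, which is precisely the reduced assertion, and hence $U$ is R-$q^t$-partially scattered.

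I expect the only genuine subtlety to be the reduction step and the bookkeeping of which quantities lie in $\Fqt$: once it is clear that comparing coefficients across the basis $\{1,\xi\}$ is legitimate, the remainder is a one-line computation driven entirely by $\mu\neq0$ and $\gcd(s,t)=1$. It is worth remarking that the two norm conditions $\N_{q^t/q}(\mu)\neq1$ and $\N_{q^{t}/q}(-\xi^{q^t+1}\mu)\neq(-1)^t$ serve to guarantee the two complementary weight-$t$ points in the construction of \cite[Corollary 4.9]{Complwei}, and do not appear to enter the R-$q^t$-partially scattered conclusion established here, which rests only on the coprimality of $s$ and $t$.
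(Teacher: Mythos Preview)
Your proof is correct and more direct than the paper's. The paper argues in two stages: for vectors $u$ lying off the two ``axes'' $\la(1,0)\ra_{\fqn}$ and $\la(0,1)\ra_{\fqn}$ it invokes \cite[Corollary 4.9]{Complwei} to obtain the stronger bound $\dim_{\fq}(U\cap\la u\ra_{\fqn})\le1$ (which a fortiori bounds the $\Fqt$-intersections), and then handles each axis separately by identifying $T\times\{0\}$ and $\{0\}\times S$ with graphs of the scattered polynomials $\mu x^{q^s}$ and $x^{q^s}$ in $\PG(1,q^t)$. Your argument bypasses both the external citation and the case split: the reduction to ``$\lambda w\in U\Rightarrow\lambda\in\fq$'' is clean, and comparing $\Fqt$-coordinates across the basis $\{1,\xi\}$ settles all $u$ uniformly. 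What the paper's route buys is the connection to the complementary-weights structure (this is where the two norm hypotheses are actually used), which is needed downstream; what your route buys is a self-contained proof of the proposition itself that makes transparent, as you note, that only $\mu\neq0$ and $\gcd(s,t)=1$ are required for the R-$q^t$-partially scattered conclusion.
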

\begin{proof}
By \cite[Corollary 4.9]{Complwei}, for every point $u \in \fqn\times\fqn \setminus\{ \la (1,0) \ra_{\fqn}\cup \la (0,1) \ra_{\fqn} \}$ we have that
\[ \dim_{\fq}(U\cap \la u \ra_{\fqn})\leq 1, \]
and clearly this property holds when replacing $\la u \ra_{\fqn}$ by $\la u \ra_{\F_{q^t}}$.
Now, assume that $u=(v,0)$ for some $v \in \F_{q^n}^*$. By assumptions, we have that $v=v_0+\xi v_1$ for some $v_0,v_1 \in \F_{q^t}$ and so
\[ \la (v,0)\ra_{\F_{q^t}}\cap U_f = \la (v,0)\ra_{\F_{q^t}}\cap T \times \{0\}, \]
and hence, since $\mu x^{q^s}$ is a scattered polynomial,
\[ \dim_{\fq}(\la (v,0)\ra_{\F_{q^t}}\cap U_f)=\dim_{\fq}(\la (v_0,v_1)\ra_{\F_{q^t}}\cap \overline{U}_{\mu x^{q^s}})\leq 1, \]
where $\overline{U}_{\mu x^{q^s}}=\{ (y,\mu y^{q^s}) \colon y \in \F_{q^t} \}$.
The same argument can be applied to the case $u=(0,v)$, obtaining the assertion.
\end{proof}

Since the equivalence preserves the $\Rt$partially scattered property, the above example give examples of R-$q^t$-partially scattered polynomials by the following result.

\begin{thm}\cite[Theorem 4.20]{Complwei}\label{thm:polscattscatt}
Let $U$ be as in the previous result.
Then $L_U$ is $\mathrm{PG L}(2,q^n)$-equivalent to $L_p$, where
\[p =\sum_{k=0}^{n-1}\left( \sum_{\ell=0}^{t-1} (u_\ell+u_\ell^{q^s}\xi ){\lambda_{\ell}^*}^{q^k} \right)x^{q^k},\]
$\{u_0,\ldots,u_{t-1}\}$ is an $\fq$-basis of $\F_{q^t}$ and $(\lambda_0^*,\ldots,\lambda_{n-1}^*)$ is the dual basis of $(u_0+\mu u_0^{q^s}\xi,\ldots,u_{t-1}+ \mu u_{t-1}^{q^s}\xi,u_0+u_0^{q^s}\xi,\ldots,u_{t-1}+u_{t-1}^{q^s}\xi)$.
\end{thm}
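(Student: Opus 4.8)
The plan is to mimic, for the specific subspace $U$, the argument behind Theorem~\ref{clas}, but to pin down the realizing collineation so as to land on exactly the stated polynomial. First I would record the structural features of $U$ coming from Proposition~\ref{prop:2pointsrqtpartscatt} and from \cite[Corollary 4.9]{Complwei}: the sets $T=\{v+\xi\mu v^{q^s}:v\in\F_{q^t}\}$ and $S=\{u+\xi u^{q^s}:u\in\F_{q^t}\}$ are $\fq$-subspaces of $\fqn=\F_{q^{2t}}$ of dimension $t$ with $T\cap S=\{0\}$ — here one uses $\mu\neq1$, which follows from $\N_{q^t/q}(\mu)\neq1$ — so $\fqn=T\oplus S$, the set $U=T\times S$ is an $\fq$-subspace of $\fqn\times\fqn$ of rank $n=2t$, and $L_U$ has the two points $\langle(1,0)\rangle_\fqn$ and $\langle(0,1)\rangle_\fqn$, both of weight $t$, which are complementary since $t+t=n$. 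Thus $U$ satisfies the hypotheses of Theorem~\ref{clas} (its parameter $s$ now being equal to $t$), so $L_U$ is $\mathrm{PGL}(2,q^n)$-equivalent to the graph of an $\fq$-linear map $h$ built from the decomposition $\fqn=T\oplus S$; I would pin down the relevant collineation by following the images of the two weight-$t$ points, which identifies $h$ as the $\fq$-linear endomorphism of $\fqn$ that kills $S$ and carries the basis $(u_\ell+\mu u_\ell^{q^s}\xi)_{\ell=0}^{t-1}$ of $T$ onto the basis $(u_\ell+u_\ell^{q^s}\xi)_{\ell=0}^{t-1}$ of $S$.

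Next I would turn $h$ into a linearized polynomial. The tuple $\mathcal B=(u_0+\mu u_0^{q^s}\xi,\ldots,u_{t-1}+\mu u_{t-1}^{q^s}\xi,\,u_0+u_0^{q^s}\xi,\ldots,u_{t-1}+u_{t-1}^{q^s}\xi)$ is an $\fq$-basis of $\fqn$ precisely because $\{u_0,\ldots,u_{t-1}\}$ is an $\fq$-basis of $\F_{q^t}$, the $\fq$-linear maps $v\mapsto v+\xi\mu v^{q^s}$ and $u\mapsto u+\xi u^{q^s}$ are injective on $\F_{q^t}$, and $T\oplus S=\fqn$; writing $(\lambda_0^*,\ldots,\lambda_{n-1}^*)$ for its dual basis and $b_i$ for its $i$-th entry, one has $x=\sum_{i=0}^{n-1}\Tr_{q^n/q}(\lambda_i^*x)\,b_i$ for all $x\in\fqn$. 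Applying $h$ and using $h(b_\ell)=u_\ell+u_\ell^{q^s}\xi$ for $0\le\ell\le t-1$ together with $h(b_{t+\ell})=0$ gives $h(x)=\sum_{\ell=0}^{t-1}(u_\ell+u_\ell^{q^s}\xi)\Tr_{q^n/q}(\lambda_\ell^*x)$. Expanding $\Tr_{q^n/q}(\lambda_\ell^*x)=\sum_{k=0}^{n-1}(\lambda_\ell^*x)^{q^k}=\sum_{k=0}^{n-1}(\lambda_\ell^*)^{q^k}x^{q^k}$ and interchanging the two finite summations yields exactly $h=\sum_{k=0}^{n-1}\bigl(\sum_{\ell=0}^{t-1}(u_\ell+u_\ell^{q^s}\xi)(\lambda_\ell^*)^{q^k}\bigr)x^{q^k}=p$, which has $q$-degree at most $n-1$; hence $L_U$ is $\mathrm{PGL}(2,q^n)$-equivalent to $L_p$.

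The main obstacle is the bookkeeping in the last step of the first paragraph: one must choose the collineation realizing the equivalence of Theorem~\ref{clas} so that the graph reached is that of the shift map $h$ sending $u_\ell+\mu u_\ell^{q^s}\xi\mapsto u_\ell+u_\ell^{q^s}\xi$, and not, say, one of the coordinate projections of $\fqn$ along the decomposition $T\oplus S$, which would produce a polynomial with a spurious $\mu$ in the coefficients (or with none where one is needed) and with the wrong half of the dual basis appearing. Concretely this requires tracking which of the two weight-$t$ points is sent to $\langle(1,0)\rangle_\fqn$ and how the direct-sum decomposition is transported, and then checking that the coefficient of $x^{q^k}$ in the resulting polynomial is $\sum_{\ell=0}^{t-1}(u_\ell+u_\ell^{q^s}\xi)(\lambda_\ell^*)^{q^k}$, i.e.\ the $\ell$-th basis vector of $S$ paired with the functional dual to the $\ell$-th basis vector of $T$. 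Once this collineation is fixed, the remainder is the routine dual-basis and trace expansion above.
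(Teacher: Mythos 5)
First, a remark on the comparison you asked for: the paper gives no proof of this statement at all --- it is imported verbatim as \cite[Theorem 4.20]{Complwei} --- so your argument can only be judged on its own merits.

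Judged so, it has a genuine gap exactly at the point you defer as ``bookkeeping''. Theorem~\ref{clas} delivers an equivalence of $L_U$ with $L_{p_{T,S}}$, where $p_{T,S}=\sum_{i=t}^{n-1}\xi_i\Tr_{q^n/q}(\xi_i^*x)$ pairs each basis vector of $S$ with \emph{its own} dual functional, i.e.\ in the present notation $\sum_{\ell=0}^{t-1}(u_\ell+u_\ell^{q^s}\xi)\Tr_{q^n/q}(\lambda_{t+\ell}^*x)$. The polynomial $p$ of the statement instead pairs the $\ell$-th basis vector of $S$ with $\lambda_\ell^*$, the functional dual to the $\ell$-th basis vector of $T$; as you correctly observe, this is the map with kernel $S$ sending $u_\ell+\mu u_\ell^{q^s}\xi\mapsto u_\ell+u_\ell^{q^s}\xi$, that is $\sigma\circ p_{S,T}$ with $\sigma\colon T\to S$ the shift. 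These are genuinely different polynomials ($p_{T,S}$ is idempotent with kernel $T$, while $p$ squares to zero and has kernel $S$), and there is no collineation ``realizing Theorem~\ref{clas}'' that lands on the latter, because the twist $\sigma$ is only $\fq$-linear and hence is not induced by an element of $\mathrm{GL}(2,q^n)$ acting on a coordinate. Concretely, the natural candidate $\left(\begin{smallmatrix}1&1\\ c&0\end{smallmatrix}\right)$ applied to $(v+\xi\mu v^{q^s},\,u+\xi u^{q^s})$ forces $c=(v+\xi v^{q^s})/(v+\xi\mu v^{q^s})$, which is not constant in $v$ for $t>1$ since it depends on $v^{q^s-1}$. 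So proving that $L_U$ is $\mathrm{PGL}(2,q^n)$-equivalent to $L_p$ for this particular $p$ --- rather than merely to $L_{p_{T,S}}$ --- requires a further argument exploiting the specific shape of $T$ and $S$ as graphs of $v\mapsto\xi\mu v^{q^s}$ and $u\mapsto\xi u^{q^s}$ together with the hypotheses on $\mu$ and $\xi$; that argument is the actual content of \cite[Theorem 4.20]{Complwei} and is absent from your proposal. Your second paragraph (the dual-basis and trace expansion converting the linear map into the displayed $q$-polynomial) is correct but routine; the missing piece is the equivalence itself.
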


Since the property of being R-$q^t$-partially scattered is invariant by equivalence, then Theorem \ref{thm:polscattscatt} gives an examples of R-$q^t$-partially scattered whose associated linear set has exactly two points of weight $n/2$, because of Proposition \ref{prop:2pointsrqtpartscatt}.
Therefore, plugging together the above results, we have the following.

\begin{cor}\label{c:exRt}
There exist examples of R-$q^t$-partially scattered polynomials $f$ with the property that $(\mathbb{S}_f,+,\cdot)$ is not a field.
\end{cor}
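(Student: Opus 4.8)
The plan is to assemble the results of this subsection into a single chain of equivalences, with the non-invertible matrix produced by Proposition~\ref{prop:nofields} serving as the final obstruction. By the discussion preceding this corollary, if one fixes $n=2t$, an integer $s$ coprime with $t$, and elements $\mu\in\F_{q^t}^*$, $\xi\in\F_{q^{2t}}\setminus\F_{q^t}$ with $\N_{q^t/q}(\mu)\neq1$ and $\N_{q^t/q}(-\xi^{q^t+1}\mu)\neq(-1)^t$, and takes $p$ to be the $q$-polynomial of Theorem~\ref{thm:polscattscatt} attached to $U=T\times S$ of Proposition~\ref{prop:2pointsrqtpartscatt}, then $p$ is R-$q^t$-partially scattered and the linear set $L_p$ has exactly two points of weight $n/2$, hence two points of complementary weights. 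A short counting check (excluding the values of $\mu$ with $\N_{q^t/q}(\mu)=1$ and then the values of $\xi$ violating the second norm condition) shows that such parameters exist for suitable $q$ and $t$.

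It then remains to show that $\mathbb{S}_p$ is not a field. Since the weight of a point of $\mathrm{PG}(1,q^n)$ is invariant under $\mathrm{PGL}(2,q^n)$-equivalence, Theorem~\ref{clas} (applied with both weights equal to $t=n/2$) shows that $L_p$ is $\mathrm{PGL}(2,q^n)$-equivalent to $L_{p_{T',S'}}$ for a pair of complementary $\fq$-subspaces $T',S'$ of $\fqn$ of dimension $t$; equivalently, $p$ is equivalent, as a $q$-polynomial, to the projection map $p_{T',S'}$. (In fact the polynomial $p$ of Theorem~\ref{thm:polscattscatt} is already displayed in the projection-map normal form of Theorem~\ref{clas}, so one may simply take $p=p_{T',S'}$ and bypass this reduction.) By \cite[Lemma 7.2]{BZZ}, equivalent $q$-polynomials have isomorphic stabilizers, whence $\mathbb{S}_p\cong\mathbb{S}_{p_{T',S'}}$; and Proposition~\ref{prop:nofields} shows $\mathbb{S}_{p_{T',S'}}$ is not a field, since it contains the rank-one matrix $\begin{pmatrix}1 & -1\\ 0 & 0\end{pmatrix}$. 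Therefore $\mathbb{S}_p$ is not a field, and $p$ is the required R-$q^t$-partially scattered example.

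The only genuinely delicate point is the bookkeeping between $\mathrm{PGL}(2,q^n)$-equivalence of rank-$n$ linear sets of $\mathrm{PG}(1,q^n)$ and $\mathrm{\Gamma L}(2,q^n)$-equivalence of the graphs $\mathcal{G}_f$, since \cite[Lemma 7.2]{BZZ} and the invariance of $\mathbb{S}_f$ are phrased in terms of graph equivalence: one has to verify that the collineation realizing $L_p\sim L_{p_{T',S'}}$ lifts to an element of $\mathrm{\Gamma L}(2,q^n)$ carrying $\mathcal{G}_p$ onto $\mathcal{G}_{p_{T',S'}}$, which holds because such a $\mathrm{PGL}(2,q^n)$-equivalence of these linear sets is induced by a linear (or semilinear) isomorphism of the underlying $\fq$-subspaces. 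The secondary, purely arithmetic point is the joint satisfiability of the two norm conditions of Proposition~\ref{prop:2pointsrqtpartscatt}, already addressed by the counting remark above.
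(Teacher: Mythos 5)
Your proof is correct and follows essentially the same route as the paper's: take the polynomial of Theorem~\ref{thm:polscattscatt}, note it is $\Rt$partially scattered by Proposition~\ref{prop:2pointsrqtpartscatt} and equivalence-invariance, and conclude via Proposition~\ref{prop:nofields} that $\mathbb{S}_f$ is not a field. The two points you flag as delicate (lifting the $\mathrm{PGL}(2,q^n)$-equivalence of linear sets to a $\mathrm{\Gamma L}(2,q^n)$-equivalence of graphs, and the nonemptiness of the parameter set, which does require $q>2$) are left implicit in the paper, so your added care is a genuine improvement rather than a deviation.
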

\begin{proof}
The polynomial as in Theorem \ref{thm:polscattscatt} is R-$q^t$-partially scattered polynomial by Proposition \ref{prop:2pointsrqtpartscatt}, for which the associated $(\mathbb{S}_f,+,\cdot)$ is not a field by Proposition \ref{prop:nofields}.
\end{proof}

\begin{rem}
    This example is the first example of $\Rt$partially scattered polynomial that satisfies the equality in the lower bound (b) of \cite[Proposition 3.1]{LZ2}.
\end{rem}

\section{Right idealizer of two-dimensional linear rank-metric codes}\label{s:RCf}

When $m=n$, $\fq$-linear rank-metric codes can be studied in terms of linearized polynomials.
Indeed, an $\fq$-linear rank-metric code $\mathcal{C}$ is an $\F_{q}$-subspace of $ {\mathcal{L}}_{n,q}$ endowed with the rank-metric and all the notions already given for rank-metric codes can be read in this context.
Indeed, two $\fq$-linear rank-metric codes $\C_1, \C_2 \subseteq {\mathcal{L}}_{n,q}$ are \emph{equivalent} if there exist two invertible linearized polynomials $g , h  \in {\mathcal{L}}_{n,q}$ and a field automorphism $\rho \in \mathrm{Aut}(\F_{q^n})$ such that
\[
\C_1=g  \circ \C_2^{\rho} \circ h  =\{g  \circ {f^{\rho} } \circ h  : f  \in \C_2\},
\]
where $f^{\rho} =\sum_{i=0}^{n-1}\rho(f_i)x^{q^i}$ if  $f =\sum_{i=0}^{n-1}f_i x^{q^i}$.
In addition, for an $\fq$-linear rank-metric code $\C$ in ${\mathcal{L}}_{n,q}$, the set 
\[ L(\C)=\{ g  \in  {\mathcal{L}}_{n,q} : g  \circ f  \in \C, \,\,\, \text{for any}\,\,\,   f  \in \C \} \]
corresponds to the \emph{left idealizer} of $\C$ and
\[ R(\C)=\{ g  \in  {\mathcal{L}}_{n,q} : f  \circ g   \in \C, \,\,\, \text{for any}\,\,\,   f  \in \C \} \]
corresponds to the \emph{right idealizer} of $\C$.
We say that a linear rank-metric code $\C$ is an $\F_{q^n}$-\emph{linear rank-metric code} if $(L(\C),+,\circ)$, where $+$ and $\circ$ are the sum and the composition of linearized polynomials respectively, contains a subring isomorphic to 
\[\mathcal{F}_n=\{\alpha x : \alpha \in \F_{q^n}\},\]
which is an $\fq$-algebra isomorphic to $\fqn$.
As shown in \cite[Remark III.3]{PSSZ23}, if $\C$ is an $\F_{q^n}$-linear rank-metric code in ${\mathcal{L}}_{n,q}$ then it is equivalent to an $\F_{q^n}$-linear rank-metric code $\C'$ that is also an $\F_{q^n}$-subspace of ${\mathcal{L}}_{n,q}$.
For any linearized polynomial $f \in {\mathcal{L}}_{n,q}$, we can consider the following $\fqn$-linear rank-metric code
\[ \mathcal{C}_f=\langle x, f  \rangle_{\fqn}.  \]
Sheekey in \cite{Sheekey} pointed out that $\C_f$ is an MRD code if and only if $f$ is a scattered polynomial. 
When $\C_f$ is an MRD code, we already know that its right idealizer is a field, cf.\ Theorem \ref{th:propertiesideal}. In the next result we will see that we can relax the condition of being MRD codes when considering two-dimensional $\fqn$-linear rank-metric codes.
To this aim we first prove a relation between $\mathbb{S}_f$ and $R(\C_f)$, extending \cite[Lemma 4.1]{LMTZ} where the result was proved under the assumption that $f$ is scattered.

\begin{thm}\label{thm:isoGfRCf}
Let $f \in {\mathcal{L}}_{n,q}$ and denote by $\C_f$ the associated rank-metric code in ${\mathcal{L}}_{n,q}$. 
Suppose that $f \notin \la x\ra_{\fqn}$.
Then the $\fq$-algebras $\mathbb{S}_f$ and $R(\C_f)$ are isomorphic.
\end{thm}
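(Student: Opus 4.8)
The plan is to build an explicit $\fq$-algebra isomorphism between the two objects, exploiting that the hypothesis $f\notin\langle x\rangle_{\fqn}$ makes $(x,f)$ an $\fqn$-basis of $\C_f$. First I would rewrite both defining conditions in matching form. Since $\C_f=\langle x,f\rangle_{\fqn}$ and $h\mapsto h\circ g$ is additive with $(\gamma x)\circ g=\gamma\cdot g$ and $(\delta f)\circ g=\delta\cdot(f\circ g)$, one gets $g\in R(\C_f)$ if and only if $g\in\C_f$ and $f\circ g\in\C_f$; equivalently, $g=ax+bf$ for (unique) $a,b\in\fqn$ and $f\circ g=cx+df$ for (unique) $c,d\in\fqn$. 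On the other side, unwinding the definition of $\mathbb S_f$, the matrix $\left(\begin{smallmatrix}a&b\\ c&d\end{smallmatrix}\right)$ lies in $\mathbb S_f$ exactly when, for every $y$, the column $\left(\begin{smallmatrix}ay+bf(y)\\ cy+df(y)\end{smallmatrix}\right)$ equals $\left(\begin{smallmatrix}z\\ f(z)\end{smallmatrix}\right)$ with $z=ay+bf(y)$, i.e.\ when $f\circ(ax+bf)=cx+df$ as elements of $\mathcal L_{n,q}$; in particular the bottom row $(c,d)$ is forced by the top row $(a,b)$, and the pairs $(a,b)$ occurring are precisely those with $f\circ(ax+bf)\in\C_f$ — exactly the same pairs that occur for elements of $R(\C_f)$.

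Next I would define $\Phi\colon R(\C_f)\to\mathbb S_f$ by $\Phi(g)=\left(\begin{smallmatrix}a&b\\ c&d\end{smallmatrix}\right)$, where $g=ax+bf$ and $f\circ g=cx+df$. The first paragraph shows $\Phi(g)\in\mathbb S_f$, so $\Phi$ is well defined. It is injective because $\Phi(g)=O$ forces $a=b=0$, hence $g=0$. It is surjective because, given $\left(\begin{smallmatrix}a&b\\ c&d\end{smallmatrix}\right)\in\mathbb S_f$, the polynomial $g:=ax+bf$ lies in $\C_f$ and satisfies $f\circ g=cx+df\in\C_f$, so $g\in R(\C_f)$ and $\Phi(g)$ is the prescribed matrix. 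It is $\fq$-linear because $g\mapsto(a,b)$ is $\fq$-linear and $g\mapsto f\circ g$ is additive and commutes with $\fq$-scalars (as $f$ is $\fq$-linear), whence $f\circ g\mapsto(c,d)$ is $\fq$-linear.

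Finally I would check that $\Phi$ respects the multiplicative structure. The identity of $R(\C_f)$ under $\circ$ is $x$, and $\Phi(x)$ is the identity matrix. For multiplicativity, write $g_i=a_ix+b_if$ and $f\circ g_i=c_ix+d_if$; using $(\lambda x)\circ h=\lambda\cdot h$ one computes
\[
g_1\circ g_2=a_1\cdot g_2+b_1\cdot(f\circ g_2)=(a_1a_2+b_1c_2)\,x+(a_1b_2+b_1d_2)\,f,
\]
\[
f\circ(g_1\circ g_2)=(f\circ g_1)\circ g_2=c_1\cdot g_2+d_1\cdot(f\circ g_2)=(c_1a_2+d_1c_2)\,x+(c_1b_2+d_1d_2)\,f,
\]
and reading off the four coefficients gives exactly the entries of $\left(\begin{smallmatrix}a_1&b_1\\ c_1&d_1\end{smallmatrix}\right)\left(\begin{smallmatrix}a_2&b_2\\ c_2&d_2\end{smallmatrix}\right)$, so $\Phi(g_1\circ g_2)=\Phi(g_1)\Phi(g_2)$. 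Hence $\Phi$ is an $\fq$-algebra isomorphism. The one point needing care throughout is the left/right asymmetry of composition: scalars from $\fqn$ can be pulled out of a composition only on the left, which is precisely why the \emph{right} idealizer (composing the unknown on the right) matches $\mathbb S_f$ and why the induced product is ordinary — not opposite — matrix multiplication; and the assumption $f\notin\langle x\rangle_{\fqn}$ is exactly what guarantees every coefficient extraction above is unambiguous.
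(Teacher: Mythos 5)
Your proposal is correct and is essentially the paper's own argument: the paper defines the same correspondence, namely $\psi\colon\left(\begin{smallmatrix}a&b\\ c&d\end{smallmatrix}\right)\mapsto ax+bf$ with $f\circ(ax+bf)=cx+df$, and verifies bijectivity and the identity $g_{AA'}=g_A\circ g_{A'}$ by the same coefficient computation; you merely write the isomorphism in the inverse direction and spell out the $\fq$-linearity and well-definedness a bit more explicitly.
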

\begin{proof}
We claim that the isomorphism is
\[\psi \colon \begin{pmatrix}
    a & b\\
    c & d
\end{pmatrix} \mapsto ax+bf  .\]
Let $A=\begin{pmatrix}
    a & b\\
    c & d
\end{pmatrix} \in \mathbb{S}_f$, and define
$g_A =ax+bf $.
The condition $A\in\mathbb S_f$ implies $f\circ g_A =cx+df \in\C_f$.
In combination with $x\circ g_A \in\C_f$, this implies
$g_A \in R(\C_f)$.
So, $\psi$ maps $\mathbb S_f$ into $R(\C_f)$.
Conversely, for any $g \in R(\C_f)$ there are unique $a,b,c,d \in \fqn$ such that
\[ g =ax+bf \,\,\,\text{and}\,\,\, f\circ g =cx+df . \]
Therefore
\[
\begin{pmatrix}
    a & b\\
    c & d
\end{pmatrix}
\begin{pmatrix}
    x\\
    f(x)
\end{pmatrix}
= \begin{pmatrix}
    y\\
    f(y)
\end{pmatrix}\quad\text{for }y=g(x).
\]
This implies that $\psi$ is bijective.

As regards the fact that $\psi$ preserves the algebra operations, the only nontrivial check remaining is $g_{AA'} =g_A \circ g_{A'} $ for any $A,A'\in\mathbb S_f$.
Let $A$ be as above, and $A'=\begin{pmatrix}
    a' & b'\\
    c' & d'
\end{pmatrix}.$
Then
\begin{align*}
g_{AA'} &=(aa'+bc')x+(ab'+bd')f =\\
&=ag_{A'} +b(c'x+d'f )=ag_{A'} +bf\circ g_{A'}=\\
&=g_{A}\circ g_{A'} 
\end{align*}
as desired.
\end{proof}

As a consequence we obtain the following.

\begin{cor} Let $f$ be a linearized polynomial in $\mathcal L_{n,q}$.
If $d(\C_f)>n/2$, then $R(\C_f)$ is a field. 
\end{cor}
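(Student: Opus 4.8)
The plan is to deduce the corollary directly from Theorem~\ref{thm:isoGfRCf} together with Theorem~\ref{thm:Gfofield}. First I would check that the hypothesis $d(\C_f)>n/2$ forces $f\notin\la x\ra_{\fqn}$: if $f=\lambda x$ for some $\lambda\in\fqn$, then $\C_f=\la x,\lambda x\ra_{\fqn}=\la x\ra_{\fqn}$ has dimension one, hence contains the zero map together with scalar maps $\alpha x$ of rank $n$, so the minimum distance is $n$; but then $\C_f$ is not two-dimensional and the association $\C_f=\la x,f\ra_{\fqn}$ is degenerate. More to the point, for the argument I only need that $f$ is not an $\fqn$-multiple of $x$, which is automatic as soon as $\C_f$ genuinely has $\fqn$-dimension two and minimum distance less than $n$; and $d(\C_f)>n/2$ with $n\ge 2$ certainly does not force $d=n$ unless $\C_f$ is one-dimensional. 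So the hypothesis of Theorem~\ref{thm:isoGfRCf} is met, and $R(\C_f)\cong\mathbb S_f$ as $\fq$-algebras.

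Next I would translate the distance hypothesis into the low-weight condition on $f$. Recall from Section~\ref{sec:linsetgraph} that for any $m\in\fqn$ the affine line of slope $m$ meets $\mathcal G_f$ in $q^{w}$ points where $w=\dim_{\fq}\ker(f-mx)=w_{L_f}(\la(1,m)\ra_{\fqn})$, and lines through directions outside $\mathcal D_f$ meet $\mathcal G_f$ in exactly one point. On the other hand, the nonzero elements of $\C_f$ are the maps $\alpha x+\beta f$ with $(\alpha,\beta)\neq(0,0)$, and such a map has rank $n-\dim_{\fq}\ker(\alpha x+\beta f)$. When $\beta\neq0$ this kernel is $\ker(f-(-\alpha\beta^{-1})x)$, whose dimension is a weight $w_{L_f}(P)$ of a point $P\in\mathcal D_f$; when $\beta=0$ the map is $\alpha x$ of rank $n$. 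Hence $d(\C_f)=n-\max_{P}w_{L_f}(P)$ where the maximum runs over points of $\PG(1,q^n)$. Thus $d(\C_f)>n/2$ is equivalent to $w_{L_f}(P)<n/2$ for every $P$, which by the discussion in Section~\ref{sec:linsetgraph} is exactly the statement that $f$ is a low weight polynomial.

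With $f$ low weight, Theorem~\ref{thm:Gfofield} gives that $(\mathbb S_f,+,\cdot)$ is a field, and transporting this along the $\fq$-algebra isomorphism $\psi$ of Theorem~\ref{thm:isoGfRCf} shows $(R(\C_f),+,\circ)$ is a field as well. I do not anticipate a genuine obstacle here; the only point requiring a little care is the bookkeeping that identifies $\max_P w_{L_f}(P)$ with $n-d(\C_f)$ and the verification that $f\notin\la x\ra_{\fqn}$ under the stated hypothesis, both of which are routine. One could alternatively phrase the proof without invoking the rank-distance translation at all by noting that $d(\C_f)>n/2$ together with $f\notin\la x\ra_{\fqn}$ already forces every point weight below $n/2$, but the cleanest write-up is simply: the hypothesis makes $f$ low weight, Theorem~\ref{thm:Gfofield} makes $\mathbb S_f$ a field, and Theorem~\ref{thm:isoGfRCf} transfers this to $R(\C_f)$.
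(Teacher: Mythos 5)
Your proposal is correct and follows essentially the same route as the paper: the paper likewise converts $d(\C_f)>n/2$ into the low-weight condition (citing a theorem of Randrianarisoa where you carry out the kernel computation $d(\C_f)=n-\max_P w_{L_f}(P)$ directly) and then applies Theorems~\ref{thm:Gfofield} and~\ref{thm:isoGfRCf}. One small inaccuracy worth noting: the hypothesis $d(\C_f)>n/2$ does \emph{not} exclude $f\in\la x\ra_{\fqn}$ (in that case $\C_f=\la x\ra_{\fqn}$ and $d=n$, and your weight--distance identity breaks down since $f-\lambda x$ is the zero codeword), but the conclusion is unaffected because then $R(\C_f)=\la x\ra_{\fqn}\cong\fqn$ is a field by direct inspection --- a degenerate case the paper's own proof also passes over silently.
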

\begin{proof}
    By \cite[Theorem 2]{Randr} it follows that $w_{L_f}(P)<n/2$ for any point $P$, that is $f$ has low weight, and the assertion then follows from Theorems \ref{thm:Gfofield} and \ref{thm:isoGfRCf}.
\end{proof}

\begin{rem}
    The above corollary is part of \cite[Lemma 4.4]{LTZ2}.
\end{rem}


\begin{ex}
Let $n=tt'$ with $t'>2$, and let $f =\mathrm{Tr}_{q^n/q^t}(x)=x+x^{q^t}+\ldots+x^{q^{n-t}}$. Thanks to the isomorphism $\psi$ provided in the proof of Theorem \ref{thm:isoGfRCf} and Example \ref{ex:traceSf}, the right idealizer of $\mathcal{C}_f$ is
\[R(\mathcal{C}_f)=\{ax+bf \colon a \in \mathbb{F}_{q^{n_{t'}}}, b \in \fqn\},\]
where $n_{t'}=t$ if $t'>2$ and $n_{t'}=n$ if $t'=2$.
Such a $R(\mathcal{C}_f)$ is not a field because $\mathbb{S}_f$ is not.
\end{ex}


\subsection{The right idealizer of the MRD codes associated with partially scattered polynomials}

In Theorem 3.3 of \cite{LZ2}, the authors showed that if $n=tt'$ and $f \in \mathcal{L}_{n,q}$ is an R-$q^t$-partially scattered polynomial then
\[ \tilde{\C}_f=\{ F_{|\mathbb{F}_{q^t}}\colon \F_{q^t}\rightarrow \fqn \colon F \in \C_f \} \]
is an MRD code with parameters $(n,t,q;t-1)$ The left idealizer $L(\tilde{\C}_f)$ contains a copy of $\fqn$ and, since $t\leq n$, $R(\tilde{\C}_f)$ is a field with $|R(\tilde{\C}_f)|\leq q^t$ by Theorem \ref{th:propertiesideal}.

In the next result we find a relation between the right idealizer of $\C_f$ and those of $\tilde{\C}_f$.
In order to do so, we need some preliminary results.

\begin{lem}
  Let $f  \in \mathcal{L}_{n,q}$ with $f \notin \langle x \rangle_{\fqn}$ and such that $f$ is $\Rt$partially scattered. Then every $h \in \mathcal{C}_f\setminus \langle x \rangle_{\fqn}$ is an $\Rt$partially scattered polynomial.
\end{lem}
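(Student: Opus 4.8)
The statement to prove is: if $f \in \mathcal{L}_{n,q}$ is $\Rt$partially scattered with $f \notin \langle x \rangle_{\fqn}$, then every $h \in \mathcal{C}_f \setminus \langle x \rangle_{\fqn}$ is also $\Rt$partially scattered. The plan is to exploit the fact that $\mathcal{C}_f = \langle x, f \rangle_{\fqn}$, so an arbitrary $h \in \mathcal{C}_f \setminus \langle x \rangle_{\fqn}$ has the form $h = \alpha x + \beta f$ with $\beta \neq 0$, and relate the direction condition for $h$ directly to that for $f$.

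First I would write out what the $\Rt$partially scattered condition means for $h$: for $y, z \in \fqn^*$ with $h(y)/y = h(z)/z$ and $y/z \in \Fqt$, we must show $y/z \in \fq$. From $h(y)/y = h(z)/z$ we get $(\alpha y + \beta f(y))/y = (\alpha z + \beta f(z))/z$, i.e.\ $\alpha + \beta f(y)/y = \alpha + \beta f(z)/z$, hence $\beta f(y)/y = \beta f(z)/z$, and since $\beta \neq 0$ this gives $f(y)/y = f(z)/z$. Combined with the hypothesis $y/z \in \Fqt$, the $\Rt$partially scattered property of $f$ (condition~\eqref{eq:condR}) yields $y/z \in \fq$, which is exactly what is needed. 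So the core argument is essentially a one-line manipulation.

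The only genuinely delicate point is the bookkeeping about membership in $\langle x \rangle_{\fqn}$: one must check that the coefficient $\beta$ of $f$ is nonzero. Since $h \in \mathcal{C}_f = \langle x, f \rangle_{\fqn}$, write $h = \alpha x + \beta f$; if $\beta = 0$ then $h = \alpha x \in \langle x \rangle_{\fqn}$, contradicting the assumption $h \notin \langle x \rangle_{\fqn}$. (Here I am using implicitly that $x$ and $f$ are $\fqn$-linearly independent in $\mathcal{L}_{n,q}$, which holds precisely because $f \notin \langle x \rangle_{\fqn}$; this guarantees the representation $h = \alpha x + \beta f$ is unique, though uniqueness is not even needed — existence of \emph{some} such representation with $\beta \neq 0$ suffices.) I expect this step to be the main (very minor) obstacle, in the sense that it is the only place where the hypotheses $f \notin \langle x \rangle_{\fqn}$ and $h \notin \langle x \rangle_{\fqn}$ are actually invoked; everything else is the algebraic substitution above. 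No appeal to the linear-set or rank-metric machinery is required for this lemma — it is purely a statement about the slope sets $D_h$ versus $D_f$.
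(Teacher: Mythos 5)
Your proof is correct and follows exactly the paper's argument: write $h=\alpha x+\beta f$ with $\beta\neq 0$ (forced by $h\notin\langle x\rangle_{\fqn}$), deduce $f(y)/y=f(z)/z$ from $h(y)/y=h(z)/z$, and conclude via the $\Rt$partially scattered property of $f$. No differences worth noting.
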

\begin{proof}
    Suppose that $h =ax+bf $, for some $a,b \in \fqn$ with $b\ne 0$. Suppose that for some $y,z \in \mathbb{F}_{q^n}^{*}$ with $y/z \in \mathbb{F}_{q^t}$ we have that $h(y)/y=h(z)/z$, then
    \[ \frac{ay+bf(y)}y=\frac{az+bf(z)}z \]
    implies $f(y)/y=f(z)/z$ and, since $f $ is $\Rt$partially scattered we have that $y/z \in \mathbb{F}_{q}$.
\end{proof}

\begin{lem}\label{lem:2}
    Let $f  \in \mathcal{L}_{n,q}$ with $f \notin \langle x \rangle_{\fqn}$ and such that $f$ is $\Rt$partially scattered. Then $h=h'$ for any $h ,h'  \in \C_f$ such that $h_{|\mathbb{F}_{q^t}}=h'_{|\mathbb{F}_{q^t}}$ (that is, they coincide when evaluating over $\F_{q^t}$).
\end{lem}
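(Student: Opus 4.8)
The plan is to reformulate the statement as the injectivity of the restriction map $\mathcal C_f\to\tilde{\mathcal C}_f$, $F\mapsto F_{|\mathbb F_{q^t}}$. Since $\mathcal C_f$ is an $\fqn$-subspace of $\mathcal L_{n,q}$ and $h\mapsto h_{|\mathbb F_{q^t}}$ is $\fq$-linear, it suffices to show that if $h\in\mathcal C_f$ satisfies $h_{|\mathbb F_{q^t}}=0$ then $h=0$; applying this to $h-h'$ yields the lemma. Write $h=ax+bf$ with $a,b\in\fqn$.

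First I would dispose of the case $b=0$: then $h=ax$ vanishes at $1\in\mathbb F_{q^t}$, forcing $a=0$ and hence $h=0$. In the remaining case $b\neq0$ we have $h\in\mathcal C_f\setminus\langle x\rangle_{\fqn}$, so by the preceding lemma $h$ is itself $\Rt$partially scattered. Now $\mathbb F_{q^t}\subseteq\ker h$ by hypothesis, and since $t>1$ (as $t$ is a nontrivial divisor of $n$) we may choose $y\in\mathbb F_{q^t}\setminus\fq$. Taking $z=1$, we get $h(y)/y=0=h(z)/z$ and $y/z=y\in\mathbb F_{q^t}$, so condition~\eqref{eq:condR} applied to $h$ gives $y=y/z\in\fq$, a contradiction. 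Therefore $b=0$, and the first case yields $h=0$.

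The argument is short and I do not anticipate a genuine obstacle; the only point needing care is the reduction to the case $b\neq0$ and the correct invocation of the preceding lemma, which guarantees that the element $h$ of the code $\mathcal C_f$ inherits the $\Rt$partially scattered property, after which the defining implication~\eqref{eq:condR} immediately produces the contradiction from the nontrivial kernel on $\mathbb F_{q^t}$.
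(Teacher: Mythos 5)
Your proof is correct and follows essentially the same route as the paper: both reduce to showing that $g=h-h'$ (equivalently, an element of $\mathcal C_f$ vanishing on $\F_{q^t}$) cannot have $b\neq 0$, by observing that a kernel containing $\F_{q^t}$ with $t>1$ violates the $\Rt$partially scattered property inherited from the preceding lemma, and then evaluating at $1$ to kill $a$. The only difference is that you phrase it as a direct contradiction while the paper uses the contrapositive of that lemma; the content is identical.
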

\begin{proof}
    Let $g =h -h' =ax+bf $, for some $a,b \in \fqn$. Since for any $y,z \in \F_{q^t}^*$ we have that $g(y)/y=g(z)/z$ then $g $ is not $\Rt$partially scattered. So, the above lemma,we have that $b=0$ and
    \[ g =h -h' =ax. \]
    In particular, $h(1)-h'(1)=a$ and this is zero, because of the assumptions.
\end{proof}

The next lemma is easy to check.

\begin{lem}\label{lem:3}
     Let $f  \in \mathcal{L}_{n,q}$ with $f \notin \langle x \rangle_{\fqn}$ and such that $f$ is $\Rt$partially scattered. If $\ell  \in R(\tilde{\mathcal{C}}_f)$ then there exists $m  \in \mathcal{C}_f$ such that $l=m_{|\mathbb{F}_{q^t}}$.
\end{lem}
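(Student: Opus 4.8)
The plan is to exploit the fact that the identity map $x$ already belongs to $\C_f$, so that its restriction to $\Fqt$ --- namely the inclusion $\iota\colon\Fqt\hookrightarrow\fqn$ --- is an element of $\tilde{\C}_f$, and then feed this particular codeword into the defining property of the right idealizer.

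First I would recall the right idealizer of $\tilde{\C}_f$ in the function-composition picture: since $\tilde{\C}_f$ consists of $\fq$-linear maps $\Fqt\to\fqn$, the code $\tilde{\C}_f$ has parameters $(n,t,q;t-1)$, and $R(\tilde{\C}_f)$ consists precisely of those $\fq$-linear maps $\ell\colon\Fqt\to\Fqt$, i.e.\ $\ell\in\mathcal{L}_{t,q}$, such that $F\circ\ell\in\tilde{\C}_f$ for every $F\in\tilde{\C}_f$. I would make explicit that the composition here is ``on the right'', as in the definition $R(\C)=\{Z:CZ\in\C\ \forall C\in\C\}$ from Section~\ref{s:preliminaries}.

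Then, given $\ell\in R(\tilde{\C}_f)$, I would apply this property with the specific codeword $F=x_{|\Fqt}=\iota$, which is a legitimate element of $\tilde{\C}_f$ because $x\in\C_f$. This yields $\iota\circ\ell\in\tilde{\C}_f$. But $\iota\circ\ell$ is simply $\ell$ regarded as a map $\Fqt\to\fqn$ (composition with the inclusion does not change the values), so by the very definition of $\tilde{\C}_f=\{F_{|\Fqt}:F\in\C_f\}$ there exists $m\in\C_f$ with $m_{|\Fqt}=\iota\circ\ell=\ell$, which is exactly the claim.

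I do not expect a real obstacle here --- this is the ``easy to check'' lemma --- the only point requiring a little care is the bookkeeping of codomains: one identifies $\ell$ with $m_{|\Fqt}$ only after viewing $\Fqt$ inside $\fqn$ via $\iota$, and one must use that $R(\tilde{\C}_f)$ acts by composition on the right of the maps in $\tilde{\C}_f$ (so that plugging in $F=\iota$ is allowed). By Lemma~\ref{lem:2} the codeword $m$ is in fact uniquely determined, but uniqueness is not needed for the statement.
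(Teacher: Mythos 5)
Your proof is correct, and since the paper only remarks that this lemma is ``easy to check'' without supplying an argument, yours is exactly the one-line verification the authors presumably intend: feed the codeword $x_{|\mathbb{F}_{q^t}}\in\tilde{\C}_f$ into the defining property of $R(\tilde{\C}_f)$ and read off that $\iota\circ\ell=\ell$ lies in $\tilde{\C}_f$, hence is the restriction of some $m\in\C_f$. The remark about the codomain bookkeeping (identifying $\ell\colon\Fqt\to\Fqt$ with a map into $\fqn$ via the inclusion) is the only point of substance, and you handle it correctly.
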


We are now ready to establish a relation between the right idealizers of the codes $\C_f$ and $\tilde{\C}_f$. To this aim we will need the following notation.

Let $\mathcal{L}_{t,n,q}=\{g\in\mathcal{L}_{n,q}\mid g(\mathbb{F}_{q^{t}})=\mathbb{F}_{q^{t}}\}$ be the $\fq$-vector space of the $\mathbb{F}_q$-endomorphisms of $\mathbb{F}_{q^{n}}$ which fix setwise $\mathbb{F}_{q^{t}}$. Then consider the equivalence relation $\approx$, such that $g\approx g'$ if and only if $g_{|\mathbb{F}_{q^{t}}}=g'_{|\mathbb{F}_{q^{t}}}$. Then the projection map \[\tilde{\pi}:\mathcal{L}_{n,q}\longrightarrow\mathcal{L}_{n,q}/\approx,\] maps $\mathcal{L}_{t,n,q}$ onto a vector space isomorphic to $\mathcal{L}_{t,q}$, and let \[\Phi:\tilde{\pi}(g) \in \mathcal{L}_{t,n,q}/\approx \longrightarrow g_{|\mathbb{F}_{q^{t}}} \in \mathcal{L}_{t,q}.\] 

\begin{thm}\label{thm:rightideas}
Let $f  \in \mathcal{L}_{n,q}$ with $f \notin \langle x \rangle_{\fqn}$ and such that $f$ is $\Rt$partially scattered. 
Consider 
\[\C_f=\langle x,f \rangle_{\fqn} \subseteq \mathcal{L}_{n,q}\]
and 
\[\tilde{\C}_f=\{ F_{|\F_{q^t}}\colon \F_{q^t}\rightarrow \fqn \colon F \in \C_f\} \subseteq \mathrm{Hom}_{\fq}(\F_{q^t},\fqn).\]
Then
\[  (\Phi\circ \tilde{\pi})(R(\C_f)\cap \mathcal{L}_{t,n,q})\subseteq R(\tilde{\C}_f). \]
\end{thm}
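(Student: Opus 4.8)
The plan is to chase an element $g \in R(\C_f) \cap \mathcal{L}_{t,n,q}$ through the two maps $\tilde\pi$ and $\Phi$ and verify directly that the resulting polynomial in $\mathcal{L}_{t,q}$ lies in $R(\tilde\C_f)$. So first I would fix $g \in R(\C_f)\cap \mathcal{L}_{t,n,q}$ and set $\ell = (\Phi\circ\tilde\pi)(g) = g_{|\mathbb{F}_{q^t}}$, which is a well-defined element of $\mathcal{L}_{t,q}$ precisely because $g(\mathbb{F}_{q^t}) = \mathbb{F}_{q^t}$. To show $\ell \in R(\tilde\C_f)$ I must check that $\tilde h \circ \ell \in \tilde\C_f$ for every $\tilde h \in \tilde\C_f$. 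Write $\tilde h = h_{|\mathbb{F}_{q^t}}$ for some $h \in \C_f$ (possible by definition of $\tilde\C_f$). Since $g \in R(\C_f)$, we have $h\circ g \in \C_f$, hence $(h\circ g)_{|\mathbb{F}_{q^t}} \in \tilde\C_f$. The key observation is that, because $g$ fixes $\mathbb{F}_{q^t}$ setwise, for any $w \in \mathbb{F}_{q^t}$ one has $(h\circ g)(w) = h(g(w)) = \tilde h(g(w)) = \tilde h(\ell(w)) = (\tilde h\circ \ell)(w)$, where the substitution $h(g(w)) = \tilde h(g(w))$ is legitimate since $g(w)\in\mathbb{F}_{q^t}$. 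Therefore $(h\circ g)_{|\mathbb{F}_{q^t}} = \tilde h\circ \ell$ as elements of $\mathrm{Hom}_{\fq}(\mathbb{F}_{q^t},\fqn)$, and the left-hand side lies in $\tilde\C_f$; hence $\tilde h\circ\ell \in \tilde\C_f$.

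Since $\tilde h$ was an arbitrary element of $\tilde\C_f$, this establishes $\ell \in R(\tilde\C_f)$, which is exactly the desired inclusion $(\Phi\circ\tilde\pi)(R(\C_f)\cap\mathcal{L}_{t,n,q})\subseteq R(\tilde\C_f)$. A small point to dispatch along the way is that $\tilde\pi$ restricted to $\mathcal{L}_{t,n,q}$ and $\Phi$ are genuinely well-defined on the objects in question: this uses that $R(\C_f)\cap\mathcal{L}_{t,n,q}$ consists of polynomials fixing $\mathbb{F}_{q^t}$ setwise, so their restrictions to $\mathbb{F}_{q^t}$ are honest elements of $\mathcal{L}_{t,q}$, and the equivalence class $\tilde\pi(g)$ is determined by $g_{|\mathbb{F}_{q^t}}$ by construction of $\approx$.

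I do not expect a serious obstacle here; the statement is essentially a compatibility diagram and the argument is a one-line chase once the maps are unwound. The one place where some care is warranted is ensuring that everything is phrased at the level of restricted maps $\mathbb{F}_{q^t}\to\fqn$ rather than at the level of polynomials in $\mathcal{L}_{n,q}$ — in particular, that $(h\circ g)_{|\mathbb{F}_{q^t}}$ really equals $h_{|\mathbb{F}_{q^t}}\circ g_{|\mathbb{F}_{q^t}}$, which is where the hypothesis $g(\mathbb{F}_{q^t})=\mathbb{F}_{q^t}$ is used in an essential way. The hypotheses that $f\notin\langle x\rangle_{\fqn}$ and that $f$ is $\Rt$partially scattered are not needed for this inclusion per se; they are presumably invoked (via Lemmas~\ref{lem:2} and~\ref{lem:3}) only if one wanted to upgrade $\subseteq$ to an equality or an isomorphism, which the present statement does not claim.
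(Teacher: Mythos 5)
Your argument is correct and is essentially identical to the paper's own proof: both fix $g\in R(\C_f)\cap\mathcal{L}_{t,n,q}$, write an arbitrary element of $\tilde\C_f$ as $h_{|\F_{q^t}}$ with $h\in\C_f$, and use the setwise stabilization $g(\F_{q^t})=\F_{q^t}$ to identify $h_{|\F_{q^t}}\circ g_{|\F_{q^t}}$ with $(h\circ g)_{|\F_{q^t}}\in\tilde\C_f$. Your closing remark that the $\Rt$partially scattered hypothesis is not needed for the inclusion itself (only for the injectivity/cardinality statements that follow) is also consistent with how the paper uses it.
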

\begin{proof}
Let $g\in R(\C_f)\cap \mathcal{L}_{t,n,q}$, so that $\Phi(\tilde{\pi}(g))=g_{|\mathbb{F}_{q^{t}}}$. 
We show that for any $k \in \tilde{\C}_f$ we have that $k\circ g_{|\mathbb{F}_{q^t}}\in \tilde{\C}_f$.
Let $k\in\tilde{\C}_f$, and $h\in \C_f$ such that $k=h_{|\mathbb{F}_{q^{t}}}$. Hence,
\[ k\circ g_{|\mathbb{F}_{q^{t}}}=k_{| \mathbb{F}_{q^{t}}}\circ g_{|\mathbb{F}_{q^{t}}}=h_{|\mathbb{F}_{q^{t}}}\circ g_{|\mathbb{F}_{q^{t}}}=h\circ g_{|\mathbb{F}_{q^{t}}},\] 
and since $h\circ g\in \C_f$, $k\circ(g_{|\mathbb{F}_{q^{t}}})\in\tilde{\C}_f$.


\end{proof}

Note that since $f$ is $\Rt$partially scattered, the restriction of $\Phi\circ\tilde{\pi}$ to $R(\C_f)\cap\mathcal{L}_{t,n,q}$ is injective. In fact, for any $h\in \C_f\setminus \{0\}$, $\dim_{\fq}(\ker(h)\cap\mathbb{F}_{q^{t}})\leq1$ and for any $k\in \tilde{\C}_f\setminus \{0\}$, $\dim_{\fq}(\ker(k))\leq 1$. A corollary of this property is the following.

\begin{cor}
Let $f  \in \mathcal{L}_{n,q}$ with $f \notin \langle x \rangle_{\fqn}$ and such that $f$ is $\Rt$partially scattered. Then 
\begin{equation}\label{eq:ineqrightideal}
|R(\tilde{\C}_f)|\geq |\mathcal{L}_{t,n,q}\cap R(\C_f)|.
\end{equation}
\end{cor}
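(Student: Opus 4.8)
The plan is to read the inequality directly off Theorem~\ref{thm:rightideas}, upgrading the set inclusion there to a cardinality statement by means of an injectivity argument. Theorem~\ref{thm:rightideas} gives
\[ (\Phi\circ\tilde\pi)\bigl(R(\C_f)\cap\mathcal{L}_{t,n,q}\bigr)\subseteq R(\tilde\C_f), \]
and hence $|R(\tilde\C_f)|\ge \bigl|(\Phi\circ\tilde\pi)(R(\C_f)\cap\mathcal{L}_{t,n,q})\bigr|$. So it suffices to prove that $\Phi\circ\tilde\pi$ is injective on $R(\C_f)\cap\mathcal{L}_{t,n,q}$, for then the image and the domain have the same cardinality and \eqref{eq:ineqrightideal} follows at once.

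For the injectivity I would argue as follows. Since $\C_f=\langle x,f\rangle_{\fqn}$ contains $x$, every $g\in R(\C_f)$ satisfies $g=x\circ g\in\C_f$, so $R(\C_f)\subseteq\C_f$; in particular $R(\C_f)\cap\mathcal{L}_{t,n,q}\subseteq\C_f$. On $\mathcal{L}_{t,n,q}$ the map $\Phi\circ\tilde\pi$ is just restriction to $\mathbb F_{q^t}$, so two elements $g,g'\in R(\C_f)\cap\mathcal{L}_{t,n,q}$ have the same image iff $g_{|\mathbb F_{q^t}}=g'_{|\mathbb F_{q^t}}$; since both lie in $\C_f$ and $f$ is $\Rt$partially scattered, Lemma~\ref{lem:2} forces $g=g'$. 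This is exactly the injectivity observation recorded just before the statement, here applied on the intersection with $\mathcal{L}_{t,n,q}$ rather than on all of $R(\C_f)$.

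Putting the two together yields $|R(\tilde\C_f)|\ge |R(\C_f)\cap\mathcal{L}_{t,n,q}|$, which is the claim. I do not expect a real obstacle: the corollary is a bookkeeping consequence of Theorem~\ref{thm:rightideas} and Lemma~\ref{lem:2}. The only points that need a word of care are that $\Phi\circ\tilde\pi$ is genuinely defined on every element of $R(\C_f)\cap\mathcal{L}_{t,n,q}$ (immediate, since this set sits inside $\mathcal{L}_{t,n,q}$ by construction) and that the injectivity of $\Phi\circ\tilde\pi$ is invoked on this intersection and not on $R(\C_f)$ as a whole.
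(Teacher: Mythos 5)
Your proposal is correct and follows essentially the same route as the paper: the inclusion from Theorem~\ref{thm:rightideas} combined with injectivity of $\Phi\circ\tilde{\pi}$ on $R(\C_f)\cap\mathcal{L}_{t,n,q}$, which the paper justifies by a kernel-dimension remark and you justify via the observation $R(\C_f)=x\circ R(\C_f)\subseteq\C_f$ together with Lemma~\ref{lem:2} --- two phrasings of the same use of the $\Rt$partially scattered hypothesis. No gap.
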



We will now see some examples in which we compute the two cardinalities.

\begin{ex}
Let consider $f =x^{q^s} \in \mathcal{L}_{n,q}$ with $n=tt'$ and $\gcd(s,t)=1$.
It is easy to see that $R(\C_f)=\{\alpha x \colon \alpha \in \fqn\}$. It holds \ref{thm:rightideas},
\[(\Phi\circ\tilde{\pi}) (\mathcal{L}_{t,n,q}\cap R(\C_f))=\{\alpha {x}_{|\mathbb{F}_{q^t}} \colon \alpha \in \F_{q^t}\}. \]
Furthermore,
\[ R(\tilde{\C}_f)=\{ \ell  \in \mathcal{L}_{t,q} \colon h \circ \ell \in \tilde{\C}_f,\,\,\forall h \in \tilde{\C}_f \}\]
\[=\{ \ell  \in \mathcal{L}_{t,q} \colon (ax+bx^{q^s})_{|\mathbb{F}_{q^t}} \circ \ell \in \tilde{\C}_f,\,\,\forall a,b \in \fqn \}, \]
and by Lemma \ref{lem:3} $\ell $ is of type ${\alpha x +\beta x^{q^s}}_{|\Fqt}$, which implies that $\alpha,\beta \in \Fqt$, since for any $x \in \mathbb{F}_{q^t}$ we have
\[\alpha^{q^t}x+\beta^{q^t}x^{q^s}=\alpha x+\beta x^{q^s}.\]
Next, ${\alpha x +\beta x^{q^s}}_{|\Fqt} \in R(\tilde{\C}_f)$ if and only if for any $a,b \in \fqn$ there exist $c,d \in \fqn$ such that
\begin{equation}\label{eq:rightideapseudo} {a \alpha x+a \beta x^{q^s}+\alpha^{q^s}b x^{q^s}+\beta^{q^s} b x^{q^{2s}}}_{|\Fqt}={cx+dx^{q^s}}_{|\Fqt}.  \end{equation}
For $t=2$, \eqref{eq:rightideapseudo} is equivalent to ask that for every $a,b \in \fqn$ there exist $c,d \in \fqn$ such that
\[ \left\{
\begin{array}{ll}
a \alpha + b \beta^{q^s}=c,\\
a \beta + b \alpha^{q^s} =d,
\end{array}
\right.\]
which admits always a solution. Therefore, $R(\tilde{\mathcal{C}}_f)=\mathcal{L}_{2,q}$.
If $t \ne 2$ then \eqref{eq:rightideapseudo} implies that $\beta=0$ and so
\[ R(\tilde{\C}_f)=\{\alpha {x}_{|\mathbb{F}_{q^t}} \colon \alpha \in \F_{q^t}\}.\]
\end{ex}

We will now analyze the case of LP polynomials.

\begin{ex}
Let consider $f =x^{q^s}+\delta x^{q^{s(n-1)}} \in \mathcal{L}_{n,q}$ with $n=tt'$, $t>4$ and $\gcd(s,t)=1$.
Therefore,
\[ \mathcal{L}_{t,n,q}\cap R(\C_f)=\{\alpha x \colon \alpha \in \mathbb{F}_{q^{\gcd(t,2)}}\}. \]
Moreover,
\[R(\tilde{\C}_f)=\{ \ell  \in \mathcal{L}_{t,q} \colon (ax+bx^{q^s}+b\delta x^{q^{s(n-1)}})_{|\mathbb{F}_{q^t}} \circ \ell \in \tilde{\C}_f,\,\,\forall a,b \in \mathbb{F}_{q^n} \}.\]
Let $l  \in R(\tilde{\C}_f)$, then by Lemma \ref{lem:3} there exist $\alpha,\beta \in \mathbb{F}_{q^n}$ such that
\[ l=\alpha x+\beta x^{q^s}+\beta \delta x^{q^{s(n-1)}}_{|\mathbb{F}_{q^t}}.  \]
Since $l  \in \mathcal{L}_{t,q}$, we have that for any $x \in \mathbb{F}_{q^t}$
\[ \alpha^{q^t}x+\beta^{q^t}x^{q^s}+\beta^{q^t}\delta^{q^t}x^{q^{s(t-1)}}=\alpha x+\beta x^{q^s}+\beta \delta x^{q^{s(t-1)}}, \]
from which it follows that $\alpha,\beta,\beta\delta \in \mathbb{F}_{q^t}$.
If $\delta \notin \mathbb{F}_{q^t}$ then $\beta=0$ and so $l=\alpha x_{|\mathbb{F}_{q^t}}$ with $\alpha \in \mathbb{F}_{q^t}$. For any $a,b \in \mathbb{F}_{q^n}$ there exist $c,d \in \mathbb{F}_{q^n}$ such that
\[ a\alpha x+b\alpha^{q^s}x^{q^s}+b\delta \alpha^{q^{s(t-1)}}x^{q^{s(t-1)}}=cx+dx^{q^s}+d\delta x^{q^{s(t-1)}}, \]
for any $x \in \mathbb{F}_{q^t}$.
From the above identity we get
\[\left\{
\begin{array}{l}
c=a\alpha, \\
d=b\alpha^{q^s},\\
d=b\alpha^{q^{s(t-1)}},
\end{array}
\right.\]
from which we get $\alpha \in \mathbb{F}_{q^{\gcd(t,2)}}$.
Hence, 
\[R(\tilde{\C}_f)=\{ \alpha x \colon  \alpha \in \mathbb{F}_{q^{\gcd(t,2)}} \}.\]
Arguing similarly, but with more involved computations, one gets the same conclusion for the case $\delta \in \mathbb{F}_{q^t}$.
\end{ex}

In both examples, apart from the case $t=2$, we have that $R(\tilde{\C}_f)$ and $ \mathcal{L}_{t,n,q}\cap R(\C_f)$ are isomorphic fields.

\section*{Acknowledgements}
The last two authors were partially supported by the Italian National Group for Algebraic and Geometric Structures and their Applications (GNSAGA - INdAM). The research of the third author was supported by the project ``VALERE: VAnviteLli pEr la RicErca" of the University of Campania ``Luigi Vanvitelli'' and by the project COMBINE from ``VALERE: VAnviteLli pEr la RicErca" of the University of Campania ``Luigi Vanvitelli''.

Valentino Smaldore and Corrado Zanella\\
Dipartimento di Tecnica e Gestione dei Sistemi Industriali\\
Universit\`a degli Studi di Padova\\
Stradella S. Nicola, 3\\
36100 Vicenza VI - Italy\\
{\em \{valentino.smaldore,corrado.zanella\}@unipd.it}

\smallskip

Ferdinando Zullo\\
Dipartimento di Matematica e Fisica,\\
Universit\`a degli Studi della Campania ``Luigi Vanvitelli'',\\
Viale Lincoln 5,\\
81100 Caserta CE - Italy\\
{{\em ferdinando.zullo@unicampania.it}}


\begin{thebibliography}{0}

\bibitem{Ball}
{\sc S. Ball:} 
The number of directions determined by a function over a finite field, \emph{J. Combin. Theory Ser. A} {\bf 104} (2003) 341--350.

\bibitem{Ball2}
{\sc S. Ball, A. Blokhuis, A.E. Brouwer, L. Storme and T. Sz\H{o}nyi:} 
On the number of slopes of the graph of a function defined over a finite field, \emph{J. Combin. Theory Ser. A} {\bf 86} (1999) 187--196.
 
\bibitem{BMZZ}
{\sc D. Bartoli, G. Micheli, G. Zini and F. Zullo:}
$r$-fat linearized polynomials, \emph{J. Combin. Theory Ser. A} {\bf 189} (2022): 105609.
\bibitem{BZZ}
{\sc D. Bartoli, G. Zini and F. Zullo:}
Investigating the exceptionality of scattered
polynomials,
\emph{Finite Fields Appl.} {\bf 77} (2022), 101956.
\bibitem{BlLav}
{\sc A. Blokhuis, M. Lavrauw:} Scattered spaces with respect to a
spread in $\PG(n, q)$, \emph{Geom.\ Dedicata} {\bf81} (2000), 231--243.
\bibitem{CsMPq5}
{\sc B. Csajb\'ok, G. Marino and O. Polverino:} 
A Carlitz type result for linearized
polynomials, 
\emph{Ars Math. Contemp.} {\bf 16(2)} (2019), 585--608.
\bibitem{CMPZ}
{\sc B. Csajb\'ok, G. Marino, O. Polverino and C. Zanella:}
A new family of MRD-codes,
\emph{Linear Algebra Appl.} {\bf 548} (2018), 203--220.
\bibitem{CsMZ2018}
{\sc B. Csajb\'ok, G. Marino and F. Zullo:}
New maximum scattered linear sets of the projective line,
\emph{Finite Fields Appl.} {\bf 54} (2018), 133--150.
\bibitem{Delsarte}
{\sc P. Delsarte:}
Bilinear forms over a finite field, with applications to coding theory,
{\it J.\ Combin.\ Theory Ser.\ A} {\bf 25} (1978), 226--241.
\bibitem{FSzT}
{\sc S.L. Fancsali, P. Sziklai and  Takáts:} The number of directions determined by less than $q$ points, {\it J. Algebr. Comb.} {\bf 37} (2013), 27--37.
\bibitem{GiuZ}
{\sc L. Giuzzi and F. Zullo:}
Identifiers for MRD-codes,
\emph{Linear Algebra Appl.} \textbf{575} (2019), 66--86.
\bibitem{LN2016}
{\sc D. Liebhold and G. Nebe}:
Automorphism groups of Gabidulin-like codes,
{\it Arch. Math.} {\bf107(4)} (2016), 355--366.
\bibitem{LMTZ}
{\sc G. Longobardi, G. Marino, R. Trombetti and Y. Zhou:}
A large family of maximum scattered linear sets of $\mathrm{PG}(1,q^n)$ and their associated MRD codes, \emph{Combinatorica} {\bf 43} (2023), 681-716.
\bibitem{LZ2}
{\sc G. Longobardi, C. Zanella:}
Partially scattered linearized polynomials and rank-metric codes, 
\emph{Finite Fields Appl.} {\bf 76} (2021), 101914.
\bibitem{LZ3}
{\sc G. Longobardi, C. Zanella:}
A standard form for scattered linearized polynomials and properties of the related translation planes, https://arxiv.org/abs/2205.15429.
\bibitem{LTZ2}
{\sc G. Lunardon, R. Trombetti and Y. Zhou:}
On kernels and nuclei of rank-metric codes,
{\it J. Algebraic Combin.} {\bf 46} (2017), 313--340.
\bibitem{NPSZcodes}
{\sc V. Napolitano, O. Polverino, P. Santonastaso and F. Zullo:}
Two pointsets in $\mathrm{PG}(2,q^n)$ and the associated codes,
\emph{Adv. Math. Commun.} {\bf 17(1)} (2023), 227--245.
\bibitem{Complwei}
{\sc V. Napolitano, O. Polverino, P. Santonastaso and F. Zullo:}
Linear sets on the projective line with complementary weights, 
\emph{Discrete Math.} {\bf 345(7)} (2022), 112890.
\bibitem{MMZ}
{\sc G. Marino, M. Montanucci and F. Zullo:}
MRD-codes arising from the trinomial $x^q + x^{q^3}+ cx^{q^5} \in \F_{q^6}[x]$,
\emph{Linear Algebra Appl.} \textbf{591} (2020), 99--114.
\bibitem{NPH2}
{\sc A. Neri, S. Puchinger, A. Horlemann-Trautmann:}
Equivalence and Characterizations of Linear Rank-Metric Codes Based on Invariants,
{\it Linear Algebra Appl.} {\bf 603} (2020), 418--469.
\bibitem{PSSZ23}
{\sc  O. Polverino, P. Santonastaso, J. Sheekey and F. Zullo:}
Divisible linear rank-metric codes,
\emph{IEEE Trans. Inf. Theory} {\bf 69(7)} (2023), 4528--4536.
\bibitem{Randr}
{\sc T.H. Randrianarisoa:}
A geometric approach to rank-metric codes and a classification of constant weight codes, 
\emph{Des. Codes Cryptogr.} {\bf 88} (2020), 1331--1348.
\bibitem{Sheekey}
{\sc J. Sheekey:}
A new family of linear maximum rank distance codes,
{\it Adv. Math. Commun.} {\bf 10(3)} (2016), 475--488.
\bibitem{sheekey_newest_preprint}
{\sc J. Sheekey:}
MRD codes: constructions and connections,
\emph{Combinatorics and finite fields: Difference sets, polynomials, pseudorandomness and applications}, Radon Series on Computational and
Applied Mathematics {\bf 23}, K.-U. Schmidt and A. Winterhof (eds.), De Gruyter (2019).
\bibitem{Zanella}
{\sc C. Zanella:} 
A condition for scattered linearized polynomials involving Dickson matrices, \emph{J. Geom.} {\bf 110(3)} (2019).
\end{thebibliography}
\end{document}